\newcommand\N{\mathbb{N}}
\newcommand\C{\mathbb{C}}
\newcommand\R{\mathbb{R}}
\newtheorem{theorem}{Theorem}
\newtheorem{lemma}{Lemma}
\newtheorem{corollary}{Corollary}
\newtheorem{conjecture}{Conjecture}
\theoremstyle{remark}
\theoremstyle{definition}
\newcommand{\be}{\begin{equation}}
\newcommand{\ee}{\end{equation}}
\newcommand{\bea}{\begin{eqnarray}}
\newcommand{\eea}{\end{eqnarray}}
\newcommand{\ben}{\begin{eqnarray*}}
\newcommand{\een}{\end{eqnarray*}}
\newcommand{\bet}{\begin{equation} \begin{split}}
\newcommand{\eet}{\end{split} \end{equation}}
\begin{document}
\title[]{Irreducibility of Chebyshev-Lissajous polynomials}
\author{Hanxiong Zhang}
\address{School of Sciences\\China University of Mining and Technology\\Beijing, 100083, China}
\email{zhanghx@cumtb.edu.cn, zhanghanxiong@163.com}

\begin{abstract}
We study certain kind of polynomials associated with Lissajous curves, called Chebyshev-Lissajous polynomials.
We investigate their irreducibilities over the real numbers and complex numbers, thus comfirming two conjectures proposed by Merino\cite{Merino2003}.
\end{abstract}

\maketitle

\section{Introduction}
Suppose $m$ and $n$ are two coprime positive integers, $a$ and $b$ are two real numbers. The plane curve given by $$x=\cos (mt+a),\quad y=\cos (nt+b)$$
is called a Lissajous  curve, or Lissajous figure. It is a  beautiful pattern formed when two harmonic vibrations along perpendicular lines are superimposed.
After eliminating the parameter $t$, we can get a polynomial equation
$f(x,y)=0$. Such  polynomial $f(x,y)$ is called a Lissajous polynomial.

For any natural number $n$, the $n^{th}$ Chebyshev polynomial of the first kind $T_n(x)$, is determined by
$$T_n(\cos\theta) = \cos n\theta,\quad \forall \theta.$$
For example,
$T_0(x)=1, T_1(x)=x, T_2(x)=2x^2-1, T_3(x)=4x^3-3x.$
By the Sum-to-Product formula
$$\cos(n+1)\theta +\cos(n-1)\theta = 2\cos\theta\cos n\theta,$$
 we can get the recursive relation  for Chebyshev polynomials:
$$T_{n+1}(x) = 2x T_n(x) -T_{n-1}(x), \quad \forall n\geq 1.$$
By induction, it is not hard to see that $T_n(x)$ is a polynomial of degree $n$ with integer coefficients,  and the coefficient of the highest term is $2^{n-1}$.
When $n$ is odd,  $T_n(x)$ is an odd function. When $n$ is even,  $T_n(x)$ is an even function.
Chebyshev polynomials also satisfy the following nested property:
$$T_m(T_n(x)) = T_{mn}(x), \quad \forall m,n \in \N^+.$$

How are Lissajous curves and Chebyshev polynomials related to each other? Here is a simple example: Suppose $m,n$ are two coprime positive integers and $m$ is odd. Consider the Lissajous curve
given by
$$x=\cos mt, \quad y=\sin nt = \cos\big(nt-\frac{\pi}{2}\big).$$ Using Chebyshev polynomials, we have
\begin{align*}T^2_n(x) + T^2_m(y)
&=T^2_n(\cos mt) + T^2_m \Big(\cos \big(nt - \frac{\pi}{2} \big)\Big)=\cos^2 mnt + \cos^2 \Big( mnt - \frac{m\pi}{2}\Big)\\
&= \cos^2 mnt+ \sin^2 mnt =1.
\end{align*}
Hence, $T^2_n(x) + T^2_m(y) -1$ is a Lissajous polynomial. Since it is given by Chebyshev polynomials, we call it a Chebyshev-Lissajous  polynomial.
For example, $T^2_1(x) + T^2_1(y) -1= x^2 + y^2 -1$, and the associated Lissajous curve is just the unit circle.
Another example,  $T^2_2(x) + T^2_3(y) -1 = (2x^2 -1)^2 + (4y^3 -3y)^2-1$, and the associated Lissajous curve is given in Figure \ref{figure23}.

\begin{figure}[h]\label{figure23}
\centering\includegraphics[height=5cm]{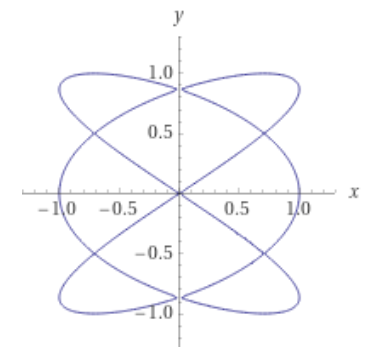}
\caption{Curve $T^2_2(x) + T^2_3(y) -1 =0$}
\end{figure}

 Merino\cite{Merino2003} introduced the general procedure of using Chebyshev polynomials to get the equations of Lissajous curves
 (we have slightly modified the description):
Suppose $m$ and $n$ are two coprime positive integers,   $a$ and $b$ are two real numbers, and $\delta = mb-na$.  Consider the Lissajous
curve given by $x=\cos (mt+a), y=\cos (nt+b)$, then
$$T_n(x) = \cos(mnt+ na), \quad T_m(y) = \cos(mnt+ mb) =\cos(mnt+ na +\delta).$$
If $\sin\delta= 0$, i.e.,  $\delta$ is an integral multiple of $\pi$, then
$$ T_m(y) =\pm \cos(mnt +na) = \pm T_n(x).$$
This is a degenerate  Lissajous curve.  If $\sin\delta\neq 0$, by eliminating the parameter $t$, we can get
$$T^2_n(x) -2T_n(x) T_m(y) \cos\delta + T^2_m(y) -\sin^2\delta =0.$$
This is a non-degenerate  Lissajous curve. Thus, a Lissajous curve is degenerate or not, depends on whether $\delta$ is an integral multiple of $\pi$.

\begin{figure}[h]\label{figure33}
\centering\includegraphics[height=5cm]{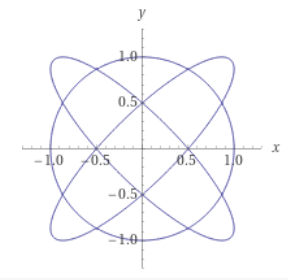}
\caption{Curve $T^2_3(x) + T^2_3(y) -1 =0$}
\end{figure}

Obviously, the above two equations are well-defined for any positive integers $m$ and $n$. For example, the curve defined by $T^2_3(x) + T^2_3(y)-1=0$
is the union of the unit circle and two ellipses, see Figure \ref{figure33}. In the end of \cite{Merino2003}, Merino proposed two conjectures:

\begin{conjecture}\label{conj1} Suppose $m$ and $n$ are two positive integers,  $\delta$ is a real number with $\sin\delta \neq 0$, then the curve defined by
$$T^2_n(x) -2T_n(x) T_m(y) \cos\delta + T^2_m(y) -\sin^2\delta =0$$
is the union of a finite number of Lissajous curves.
\end{conjecture}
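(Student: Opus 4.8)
The plan is to translate the polynomial equation into a trigonometric condition on an angle parametrization, and then read off that condition as a finite union of Lissajous parametrizations. Write $F(x,y)=T_n^2(x)-2T_n(x)T_m(y)\cos\delta+T_m^2(y)-\sin^2\delta$. I would begin by completing the square,
\[
F(x,y)=\bigl(T_n(x)-T_m(y)\cos\delta\bigr)^2-\sin^2\delta\,\bigl(1-T_m^2(y)\bigr).
\]
Because $\sin\delta\neq 0$, a real solution of $F=0$ must satisfy $T_m^2(y)\le 1$; since $|T_m(y)|>1$ for every real $y$ with $|y|>1$, this forces $y\in[-1,1]$, and symmetrically $x\in[-1,1]$. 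Hence the real curve $C=\{F=0\}$ lies entirely in the square $[-1,1]^2$.

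On $[-1,1]^2$ I substitute $x=\cos\phi$, $y=\cos\psi$ with $\phi,\psi\in[0,\pi]$, so that $T_n(x)=\cos n\phi$ and $T_m(y)=\cos m\psi$. The key elementary identity is
\[
\cos^2 p-2\cos p\cos q\cos\delta+\cos^2 q-\sin^2\delta=\bigl(\cos q-\cos(p+\delta)\bigr)\bigl(\cos q-\cos(p-\delta)\bigr),
\]
which follows by viewing the left-hand side as a quadratic in $\cos q$ whose two roots are $\cos p\cos\delta\pm|\sin\delta\sin p|$, that is, $\cos(p-\delta)$ and $\cos(p+\delta)$. Taking $p=n\phi$ and $q=m\psi$, this shows that $F(\cos\phi,\cos\psi)=0$ if and only if $m\psi\equiv\varepsilon_1 n\phi+\varepsilon_2\delta\pmod{2\pi}$ for some signs $\varepsilon_1,\varepsilon_2\in\{\pm1\}$. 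So $C$ is the image, under $(\phi,\psi)\mapsto(\cos\phi,\cos\psi)$, of the union $\Lambda$ of the lines $\ell_{\varepsilon_1,\varepsilon_2,k}\colon m\psi=\varepsilon_1 n\phi+\varepsilon_2\delta+2k\pi$ with $k\in\mathbb{Z}$.

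Next, writing $d=\gcd(m,n)$, $m=d\mu$, $n=d\nu$ with $\gcd(\mu,\nu)=1$, I parametrize one line: on $\ell_{1,\varepsilon_2,k}$ one may take $\phi=\mu s$, $\psi=\nu s+b$ with $b=(\varepsilon_2\delta+2k\pi)/m$, so that the image of $\ell_{1,\varepsilon_2,k}$ is exactly the Lissajous curve $x=\cos\mu s$, $y=\cos(\nu s+b)$ (and, using the identity above, one checks directly that this curve does lie on $F=0$). I would then observe that $\Lambda$ is invariant under the group $\Gamma$ generated by $\phi\mapsto-\phi$, $\psi\mapsto-\psi$, $\phi\mapsto\phi+2\pi$, $\psi\mapsto\psi+2\pi$ --- each generator sends a line of this family to another one --- and that $[0,\pi]^2$ is a fundamental domain for $\Gamma$, so $C$ equals the image of all of $\Lambda$ and not merely of $\Lambda\cap[0,\pi]^2$. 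Since the translations change $k$ only by an element of $m\mathbb{Z}+n\mathbb{Z}=d\mathbb{Z}$ and the reflections merely flip the signs $\varepsilon_1,\varepsilon_2$, there are at most $d$ orbits of lines, and therefore $C$ is a union of at most $\gcd(m,n)$ Lissajous curves. (For instance, $m=n=3$, $\delta=\pi/2$ predicts three components, matching Figure 2.)

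The square-completion and the quadratic-in-$\cos q$ identity are routine. The delicate step, and the one I expect to be the main obstacle, is the last one: verifying the $\Gamma$-invariance of $\Lambda$ carefully, making sure that each line's image is the \emph{whole} Lissajous curve rather than merely an arc of it (this is why one sweeps the full line, $s\in\mathbb{R}$, and not just the segment inside $[0,\pi]^2$), and confirming that passing to the coprime pair $(\mu,\nu)$ genuinely produces Lissajous curves in the sense of the definition. One should also keep an eye on degenerate branches, for example when $\mu=\nu=1$: there one must use $\sin\delta\neq 0$ to see that the phase $b$ is not an integer multiple of $\pi$, so that the branch is an honest (non-degenerate) ellipse.
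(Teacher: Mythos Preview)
Your approach is correct and reaches the conclusion by a genuinely different route from the paper. The paper proceeds algebraically: it first establishes the polynomial factorization
\[
t_d(u,v)=4^{d-1}\prod_{k=1}^{d}\Bigl(u^2-2\cos\tfrac{\delta+2k\pi}{d}\,uv+v^2-\sin^2\tfrac{\delta+2k\pi}{d}\Bigr),
\]
then uses the nesting $T_n=T_d\circ T_{n'}$, $T_m=T_d\circ T_{m'}$ (with $d=\gcd(m,n)$) to write the given polynomial as $t_d(T_{n'}(x),T_{m'}(y))$, a product of $d$ Chebyshev--Lissajous polynomials with coprime indices $(n',m')$; it then invokes the irreducibility theorem (the sufficient part of Conjecture~2) to conclude each factor defines a single Lissajous curve. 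You instead work directly with the real locus: after confining it to $[-1,1]^2$ via the square-completion, your substitution $x=\cos\phi$, $y=\cos\psi$ together with the identity $(\cos q-\cos(p+\delta))(\cos q-\cos(p-\delta))$ converts $F=0$ into linear congruences $m\psi\equiv\varepsilon_1 n\phi+\varepsilon_2\delta\pmod{2\pi}$, each of whose images under $(\cos\phi,\cos\psi)$ is a full Lissajous parametrization with the coprime pair $(\mu,\nu)$. The core trigonometric identity is the same one the paper uses (in its proof of Theorem~1), but you never promote it to a polynomial identity and never need the irreducibility result. The trade-off: the paper obtains the explicit irreducible factorization over $\C$ and hence the exact count of $d$ components, while your argument describes only the set of real points---which is all the conjecture asks for. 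Your $\Gamma$-invariance step is slightly more than necessary (since $\Lambda=\pi^{-1}(C)$ and $\pi$ is onto $[-1,1]^2$, the equality $\pi(\Lambda)=C$ is automatic), but the orbit bound of $d$ is correct and matches the paper's count.
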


\begin{conjecture}\label{conj2} Suppose $m$ and $n$ are two positive integers,  $\delta$ is a real number with $\sin\delta \neq 0$, then
the polynomial $$T^2_n(x) -2T_n(x) T_m(y) \cos\delta + T^2_m(y) -\sin^2\delta$$
is irreducible over $\mathbb{R}$ if and only if $m$ and $n$ are coprime.
\end{conjecture}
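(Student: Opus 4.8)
The plan is to translate the question into the geometry of the plane curve $V(P)$ and then translate back. Write $\widetilde{P}(u,v)=u^{2}-2uv\cos\delta+v^{2}-\sin^{2}\delta$, so that $P(x,y)=\widetilde{P}\big(T_{n}(x),T_{m}(y)\big)$ and $V(P)=\Phi^{-1}(C)$, where $\Phi=(T_{n},T_{m})\colon\C^{2}\to\C^{2}$ and $C=\{\widetilde{P}=0\}$. Since $\sin\delta\neq0$ the conic $C$ is smooth (its discriminant is $-\sin^{4}\delta\neq0$), while $\Phi$ is \'etale off the finitely many lines on which $T_{n}'(x)=0$ or $T_{m}'(y)=0$, and these lines meet $V(P)$ in a finite set; hence $V(P)$ is smooth, in particular reduced, off a finite set, so $P$ is squarefree over $\C$. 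Consequently $P$ is irreducible over $\C$ as soon as $V(P)$ is irreducible, and irreducibility over $\C$ trivially implies irreducibility over $\R$. Writing $d=\gcd(m,n)$, the proof splits into: (a) when $d=1$, show that $V(P)$ is irreducible; (b) when $d>1$, exhibit a proper factorization of $P$ over $\R$.

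For (a) I would use a rational parametrization of $C$, namely $w\mapsto(u,v)=\big(\tfrac12(e^{-i\delta}w+e^{i\delta}w^{-1}),\ \tfrac12(w+w^{-1})\big)$, which one checks from the trigonometric identity $\widetilde{P}(\cos A,\cos B)=\big(\cos A-\cos(B-\delta)\big)\big(\cos A-\cos(B+\delta)\big)$. Pulling this back through $T_{n}$ and $T_{m}$ (using $x=\tfrac12(s+s^{-1})\Rightarrow T_{n}(x)=\tfrac12(s^{n}+s^{-n})$, and likewise in $y$), a short verification gives
\[
  V(P)=\overline{\pi(E)},\qquad E=\{(s,r)\in(\C^{*})^{2}:\ e^{i\delta}s^{n}=r^{m}\},\quad \pi(s,r)=\big(\tfrac12(s+s^{-1}),\ \tfrac12(r+r^{-1})\big).
\]
Next I would compare degrees over $C$: the projection $V(P)\to C$, $(x,y)\mapsto(T_{n}(x),T_{m}(y))$, has degree $mn$ (a generic fibre is $\{T_{n}(x)=u\}\times\{T_{m}(y)=v\}$); the map $E\to C$ has degree $mn$ by the same fibre count; and $\pi|_{E}$ is generically injective, because a coincidence $\pi(s,r)=\pi(s',r')$ forces $(s',r')\in\{(s^{\pm1},r^{\pm1})\}$, and membership of such a partner in $E$ forces $s^{2n}=1$, $r^{2m}=1$, or $e^{2i\delta}=1$, the last being excluded since $\sin\delta\neq0$. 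When $d=1$ the set $E$ is a coset of the kernel of the character $(s,r)\mapsto s^{n}r^{-m}$, which is connected because $\gcd(m,n)=1$; so $E$ is irreducible and $\overline{\pi(E)}$ is an irreducible curve of degree $mn$ over $C$. Since every component of $V(P)$ dominates $C$ (a component lying over a single point of $C$ would be contained in a finite set, hence a point) with positive degree, and these degrees sum to $mn$, we get $V(P)=\overline{\pi(E)}$, irreducible; with squarefreeness, $P$ is irreducible over $\C$, hence over $\R$.

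For (b), write $m=dm'$, $n=dn'$ with $\gcd(m',n')=1$, and prove the explicit identity
\[
  P(x,y)=4^{\,d-1}\prod_{j=0}^{d-1}\Big(T_{n'}(x)^{2}-2T_{n'}(x)T_{m'}(y)\cos\tfrac{\delta+2\pi j}{d}+T_{m'}(y)^{2}-\sin^{2}\tfrac{\delta+2\pi j}{d}\Big).
\]
To verify it, substitute $x=\cos\theta$, $y=\cos\phi$ (which sweeps out the Zariski-dense set $[-1,1]^{2}$); using the identity for $\widetilde{P}(\cos A,\cos B)$ above together with product-to-sum one obtains
\[
  P(\cos\theta,\cos\phi)=4\,\sin\tfrac{n\theta+m\phi-\delta}{2}\,\sin\tfrac{n\theta-m\phi+\delta}{2}\,\sin\tfrac{n\theta+m\phi+\delta}{2}\,\sin\tfrac{n\theta-m\phi-\delta}{2},
\]
and the analogous expression for each of the $d$ factors on the right; the identity then drops out of the classical formula $\prod_{j=0}^{d-1}\sin\!\big(x+\tfrac{j\pi}{d}\big)=2^{1-d}\sin(dx)$ applied to each of the four $\sin$-products. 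Every factor on the right-hand side is a nonconstant polynomial with real coefficients and $d\ge2$, so $P$ is reducible over $\R$. (Moreover $\tfrac{\delta+2\pi j}{d}\notin\pi\mathbb{Z}$ for all $j$, since otherwise $\delta\in\pi\mathbb{Z}$; hence each factor is again a non-degenerate Chebyshev--Lissajous polynomial with coprime indices, irreducible by (a), and by squarefreeness this is the complete factorization of $P$ --- which also settles Conjecture~\ref{conj1}, each factor cutting out a Lissajous curve.)

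The main obstacle is twofold. In direction (a) the delicate point is the identification $V(P)=\overline{\pi(E)}$: it relies on the degree bookkeeping over $C$ and on the generic injectivity of $\pi|_{E}$ --- which is exactly where the hypothesis $\sin\delta\neq0$ enters --- together with the squarefreeness of $P$. In direction (b) the crux is the $\sin$-product factorization of $P(\cos\theta,\cos\phi)$ and its regrouping via $\prod_{j}\sin(x+j\pi/d)$; once that trigonometric identity is in hand, the factorization, and hence the reducibility, is immediate.
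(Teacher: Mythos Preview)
Your proof is correct, and both directions are handled soundly, but your route for the sufficiency direction is genuinely different from the paper's.

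For the necessity direction ($d>1$) your argument and the paper's coincide: both use the nested-Chebyshev identity $T_n=T_d\circ T_{n'}$, $T_m=T_d\circ T_{m'}$ together with the explicit factorisation
\[
\widetilde P\big(T_d(u),T_d(v)\big)=4^{d-1}\prod_{j=0}^{d-1}\widetilde P_{\delta_j}(u,v),\qquad \delta_j=\tfrac{\delta+2\pi j}{d},
\]
which is the paper's Theorem~\ref{theorem1}. Your verification of this identity via the product formula $\prod_{j=0}^{d-1}\sin\!\big(x+\tfrac{j\pi}{d}\big)=2^{1-d}\sin(dx)$ is a pleasant alternative to the paper's derivation from the factorisation $T_d(x)-\cos\delta=2^{d-1}\prod_k\big(x-\cos\tfrac{\delta+2k\pi}{d}\big)$; both are straightforward once the statement is known.

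For the sufficiency direction ($\gcd(m,n)=1$) the approaches diverge. The paper argues purely algebraically in the style of Tverberg: substitute $x\mapsto x^m$, $y\mapsto y^n$, compare top homogeneous pieces, and use coprimality to force each putative factor to have leading part $px^{mn}+p'y^{mn}$; the contradiction then comes from an AM--GM estimate, which is exactly where $|\cos\delta|<1$ enters. Irreducibility over $\C$ is obtained afterwards by a short complex-conjugation argument. Your approach is geometric: you realise $V(P)$ as the Zariski closure of the image of the torus coset $E=\{e^{i\delta}s^{n}=r^{m}\}$ under $(s,r)\mapsto\big(\tfrac12(s+s^{-1}),\tfrac12(r+r^{-1})\big)$, observe that $E$ is connected precisely when $(n,-m)$ is a primitive lattice vector, and finish by a degree count over the conic $C$, with the generic injectivity of $\pi|_E$ supplying the place where $\sin\delta\neq0$ is used. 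This yields irreducibility over $\C$ (hence over $\R$) in one stroke and makes the role of coprimality transparent as connectedness of a character kernel; the price is that you invoke a little algebraic geometry (squarefreeness via the \'etale locus, degree additivity over $C$), whereas the paper stays entirely elementary.
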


From the physical point of view, these two conjectures are quite obvious.  The main purpose of this article is to prove these two conjectures, mathematically.
To achieve this, we need some theorems.

\begin{theorem}\label{theorem1}
Suppose $n$ is a positive integer and $\delta$ is a real number,  then
\begin{align*}& T^2_n(x) -2T_n(x) T_n(y) \cos\delta + T^2_n(y) -\sin^2\delta\\
 =&4^{n-1}\prod_{k=1}^{n} \Big(x^2-2\cos\frac{\delta+2k\pi}{n}xy+y^2-\sin^2\frac{\delta+2k\pi}{n}\Big).\end{align*}
\end{theorem}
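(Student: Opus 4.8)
The plan is to reduce the identity to two applications of an elementary trigonometric identity, sandwiched around the classical factorization of $T_n(t)-\cos\delta$.

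First I would establish, for all real $A,B,\delta$, the auxiliary identity
\[
\cos^2 A + \cos^2 B - 2\cos A\cos B\cos\delta - \sin^2\delta = \bigl(\cos(A+B)-\cos\delta\bigr)\bigl(\cos(A-B)-\cos\delta\bigr).
\]
This is a routine computation: using $\cos^2 A=\tfrac12(1+\cos 2A)$, $\cos 2A+\cos 2B=2\cos(A+B)\cos(A-B)$, $2\cos A\cos B=\cos(A+B)+\cos(A-B)$ and $\sin^2\delta=1-\cos^2\delta$, the left-hand side becomes $\cos^2\delta-\bigl(\cos(A+B)+\cos(A-B)\bigr)\cos\delta+\cos(A+B)\cos(A-B)$, which is exactly the factorization of this quadratic in the variable $\cos\delta$.

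Next I would record the factorization
\[
T_n(t)-\cos\delta = 2^{n-1}\prod_{k=1}^{n}\Bigl(t-\cos\tfrac{\delta+2k\pi}{n}\Bigr).
\]
Here $T_n(t)-\cos\delta$ has degree $n$ in $t$ with leading coefficient $2^{n-1}$, and each $t=\cos\tfrac{\delta+2k\pi}{n}$ is a root because $T_n\bigl(\cos\tfrac{\delta+2k\pi}{n}\bigr)=\cos(\delta+2k\pi)=\cos\delta$; for generic $\delta$ the $n$ listed roots are distinct, so they determine the polynomial, and the identity extends to all $\delta$ by continuity in $\delta$ of both sides. (Equivalently this is the standard identity $T_n(t)-\cos n\varphi=2^{n-1}\prod_{k=0}^{n-1}\bigl(t-\cos(\varphi+\tfrac{2k\pi}{n})\bigr)$ applied with $\varphi=\delta/n$.)

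Finally I would combine the two ingredients. Substituting $x=\cos\alpha$, $y=\cos\beta$ gives $T_n(x)=\cos n\alpha$, $T_n(y)=\cos n\beta$, so by the auxiliary identity with $A=n\alpha$, $B=n\beta$ the left-hand side of Theorem~\ref{theorem1} equals $\bigl(\cos n(\alpha+\beta)-\cos\delta\bigr)\bigl(\cos n(\alpha-\beta)-\cos\delta\bigr)$. I would then apply the factorization of $T_n(t)-\cos\delta$ to each factor, once with $t=\cos(\alpha+\beta)$ and once with $t=\cos(\alpha-\beta)$, obtaining $2n$ linear factors, and regroup them into $n$ pairs indexed by $k$, the $k$-th pair being $\bigl(\cos(\alpha+\beta)-\cos\gamma_k\bigr)\bigl(\cos(\alpha-\beta)-\cos\gamma_k\bigr)$ with $\gamma_k=\tfrac{\delta+2k\pi}{n}$. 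Running the auxiliary identity backwards, now with $A=\alpha$, $B=\beta$, $\delta=\gamma_k$, turns each pair into $\cos^2\alpha-2\cos\alpha\cos\beta\cos\gamma_k+\cos^2\beta-\sin^2\gamma_k=x^2-2\cos\gamma_k\,xy+y^2-\sin^2\gamma_k$, which is precisely the $k$-th factor on the right. The resulting identity holds for all $x,y\in[-1,1]$, hence, both sides being polynomials in $x$ and $y$, it holds identically. Every step is elementary; the only points demanding a little care are the genericity-plus-continuity justification of the factorization of $T_n(t)-\cos\delta$ and the bookkeeping that matches the $2n$ linear factors into the claimed $n$ quadratic ones, and this is the main (rather mild) obstacle I anticipate.
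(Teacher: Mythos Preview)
Your proposal is correct and follows essentially the same route as the paper: substitute $x=\cos\alpha$, $y=\cos\beta$, factor the left-hand side as $(\cos n(\alpha+\beta)-\cos\delta)(\cos n(\alpha-\beta)-\cos\delta)$, apply the factorization $T_n(t)-\cos\delta=2^{n-1}\prod_k\bigl(t-\cos\tfrac{\delta+2k\pi}{n}\bigr)$ (the paper's Lemma~1/Corollary~2), regroup into $n$ quadratic factors, and extend from $[-1,1]^2$ to a polynomial identity. The only cosmetic difference is that you isolate the auxiliary trigonometric identity and invoke it in both directions, whereas the paper performs the forward and backward computations directly via the addition formulas.
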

This theorem tells us: when $\sin\delta\neq 0$, the curve defined by
$$T^2_n(x) -2T_n(x) T_n(y) \cos\delta + T^2_n(y) -\sin^2\delta =0$$
is the union of $n$ ellipses (including circles).

When $\delta = \frac{\pi}{2}$, we get an important special case.
\begin{corollary}\label{cor}
Suppose $n$ is a positve integer,  then
\begin{align*} T^2_n(x)+ T^2_n(y) -1 = 4^{n-1}\prod_{k=1}^{n} \Big(x^2-2\cos\frac{(4k+1)\pi}{2n}xy+y^2-\sin^2\frac{(4k+1)\pi}{2n}\Big).\end{align*}
\end{corollary}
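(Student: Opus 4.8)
The plan is simply to read off Corollary~\ref{cor} from Theorem~\ref{theorem1} by specializing to $\delta = \frac{\pi}{2}$; no independent argument is needed, and the entire task reduces to a short verification that the two sides of Theorem~\ref{theorem1} transform into the two sides of the corollary under this substitution.

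Concretely, I would proceed in three steps. First, evaluate the left-hand side of Theorem~\ref{theorem1} at $\delta = \frac{\pi}{2}$: since $\cos\frac{\pi}{2} = 0$ the mixed term $-2T_n(x)T_n(y)\cos\delta$ disappears, and since $\sin^2\frac{\pi}{2} = 1$ the constant $-\sin^2\delta$ becomes $-1$, so the left-hand side is exactly $T^2_n(x) + T^2_n(y) - 1$. Second, simplify the angle in each factor of the product on the right: for $\delta = \frac{\pi}{2}$ one has
$$\frac{\delta + 2k\pi}{n} = \frac{\frac{\pi}{2} + 2k\pi}{n} = \frac{(4k+1)\pi}{2n},$$
so the generic factor $x^2 - 2\cos\frac{\delta + 2k\pi}{n}xy + y^2 - \sin^2\frac{\delta + 2k\pi}{n}$ becomes $x^2 - 2\cos\frac{(4k+1)\pi}{2n}xy + y^2 - \sin^2\frac{(4k+1)\pi}{2n}$. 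Third, observe that this replacement is valid termwise for $k = 1, \dots, n$ and leaves the leading constant $4^{n-1}$ untouched, so the product in Theorem~\ref{theorem1} turns into the product claimed in the corollary. Combining the three steps yields the identity.

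There is no real obstacle here: the mathematical content is carried entirely by Theorem~\ref{theorem1}, and what remains is bookkeeping with elementary trigonometry. The reason to record this case separately is its significance --- $T^2_n(x) + T^2_n(y) - 1$ is precisely the Chebyshev-Lissajous polynomial attached to $m = n$ together with the quarter-period phase shift (so that $y$ plays the role of $\sin nt$), the symmetric figure highlighted in the introduction; and since $\sin\frac{(4k+1)\pi}{2n} \neq 0$ for every $k$ (as $4k+1$ is odd and hence not a multiple of $2n$), the corollary exhibits this polynomial explicitly as a product of $n$ nondegenerate conics. If one wished to avoid invoking Theorem~\ref{theorem1}, one could instead set $x = \cos\alpha$, $y = \cos\beta$, use the identity $\cos^2 n\alpha + \cos^2 n\beta - 1 = \cos(n\alpha + n\beta)\cos(n\alpha - n\beta)$, and expand each cosine over the relevant arcs; but specializing the theorem is considerably cleaner.
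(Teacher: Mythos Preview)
Your proposal is correct and matches the paper's approach exactly: the paper states this corollary immediately after Theorem~\ref{theorem1} with the single remark ``When $\delta = \frac{\pi}{2}$, we get an important special case,'' giving no further proof. Your three-step verification (killing the mixed term, reducing the constant, and rewriting the angle as $\frac{(4k+1)\pi}{2n}$) is precisely the intended reading.
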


\begin{theorem}\label{theorem2}
Suppose $n$ is a positve integer, then
\begin{align*} T_{2n+1}(x) - T_{2n+1}(y)
 =4^{n}(x-y)\prod_{k=1}^{n} \Big(x^2-2\cos\frac{2k\pi}{2n+1}xy+y^2-\sin^2\frac{2k\pi}{2n+1}\Big).\end{align*}
\end{theorem}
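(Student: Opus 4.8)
The plan is to fix a real number $\beta$, put $y=\cos\beta$ so that $T_{2n+1}(y)=\cos\big((2n+1)\beta\big)$, prove the asserted equality as a polynomial identity in the single variable $x$ for all such $\beta$, and then conclude: since $\cos\beta$ takes infinitely many values, an identity of polynomials in $x$ whose coefficients are polynomials in $y=\cos\beta$ which holds for infinitely many values of $y$ is an identity in $\R[x,y]$. With that reduction in hand, everything becomes an explicit root computation.

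The key step is the classical factorization
\begin{equation*}
T_{2n+1}(x)-\cos\big((2n+1)\beta\big)=4^{\,n}\prod_{k=0}^{2n}\Big(x-\cos\big(\beta+\tfrac{2k\pi}{2n+1}\big)\Big).
\end{equation*}
To see this, observe that the right-hand side is a polynomial in $x$ of degree $2n+1$ whose leading coefficient is $2^{2n}=4^{\,n}$, which is exactly the leading coefficient of $T_{2n+1}$; and each $\cos\big(\beta+\tfrac{2k\pi}{2n+1}\big)$ is a zero of the left-hand side, since $T_{2n+1}\big(\cos(\beta+\tfrac{2k\pi}{2n+1})\big)=\cos\big((2n+1)\beta+2k\pi\big)=\cos\big((2n+1)\beta\big)$. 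For every $\beta$ outside a finite set these $2n+1$ numbers are pairwise distinct, hence exhaust the zeros, and the two polynomials agree; such $\beta$ still suffice for the density argument above (alternatively, both sides depend continuously on $\beta$, so the identity holds for all $\beta$).

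It remains to regroup the $2n+1$ linear factors. The index $k=0$ contributes $x-\cos\beta=x-y$. For $1\le k\le n$ pair the index $k$ with $2n+1-k$; since $\tfrac{2(2n+1-k)\pi}{2n+1}=2\pi-\tfrac{2k\pi}{2n+1}$ we have $\cos\big(\beta+\tfrac{2(2n+1-k)\pi}{2n+1}\big)=\cos\big(\beta-\tfrac{2k\pi}{2n+1}\big)$, and a short computation with the sum-to-product and product-to-sum formulas gives, writing $\theta=\tfrac{2k\pi}{2n+1}$,
\begin{equation*}
\Big(x-\cos(\beta+\theta)\Big)\Big(x-\cos(\beta-\theta)\Big)=x^{2}-2\cos\theta\cos\beta\,x+\cos^{2}\beta-\sin^{2}\theta=x^{2}-2\cos\theta\,xy+y^{2}-\sin^{2}\theta.
\end{equation*}
Multiplying the $k=0$ factor, the $n$ quadratic factors, and the constant $4^{\,n}$ together reproduces exactly the right-hand side of Theorem~\ref{theorem2}, which completes the proof after the density step. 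No step here is genuinely hard; the only points that need a little care are the pairwise distinctness of the $2n+1$ nodes for generic $\beta$ (which legitimizes the factorization) and the routine passage from a trigonometric identity to a polynomial identity.

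Finally, we note a shortcut that avoids computing the roots. Setting $\delta=0$ in Theorem~\ref{theorem1} (applied with its ``$n$'' equal to $2n+1$) gives
\begin{equation*}
\big(T_{2n+1}(x)-T_{2n+1}(y)\big)^{2}=4^{\,2n}(x-y)^{2}\prod_{k=1}^{2n}\Big(x^{2}-2\cos\tfrac{2k\pi}{2n+1}\,xy+y^{2}-\sin^{2}\tfrac{2k\pi}{2n+1}\Big),
\end{equation*}
and the product on the right is a perfect square because the factor indexed by $k$ coincides with the one indexed by $2n+1-k$; extracting the square root in the unique factorization domain $\R[x,y]$ and fixing the sign by comparing the leading terms in $x$ then yields Theorem~\ref{theorem2}.
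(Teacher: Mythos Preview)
Your main argument is essentially identical to the paper's: both set $y=\cos\beta$, invoke the factorization $T_{2n+1}(x)-\cos((2n+1)\beta)=4^{n}\prod_{k}\big(x-\cos(\beta+\tfrac{2k\pi}{2n+1})\big)$ (the paper cites this as its Lemma~\ref{lemma1}, you re-derive it), pair the factor indexed by $k$ with that indexed by $2n+1-k$ to get the quadratic $x^{2}-2\cos\theta\,xy+y^{2}-\sin^{2}\theta$, and then promote the resulting identity on $[-1,1]$ (equivalently, for infinitely many $y$) to a polynomial identity. Your additional shortcut---specializing Theorem~\ref{theorem1} to $\delta=0$ and extracting a square root in the UFD $\R[x,y]$---is a correct alternative derivation that the paper does not mention.
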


\section{Preliminary}
First, let us introduce a factorization of Chebyshev polynomials. For more factorizations, see \cite{wolfram2022}.
\begin{lemma}\label{lemma1}
Suppose $n$ is a positve integer and $\delta$ is a real number, then
$$T_n(x) - \cos\delta= 2^{n-1}\prod_{k=1}^{n}  \Big(x - \cos\frac{\delta + 2k\pi}{n}\Big).$$
\end{lemma}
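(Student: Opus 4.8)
The plan is to verify the identity by comparing both sides as polynomials in $x$: they have the same degree $n$ and the same leading coefficient, so it suffices to show they have the same roots (with multiplicity), or equivalently that they agree at $n$ distinct points. The cleanest route is to exhibit the roots of the left-hand side explicitly. I would start from the defining property $T_n(\cos\theta)=\cos n\theta$, so that for $x=\cos\theta$ the equation $T_n(x)=\cos\delta$ becomes $\cos n\theta=\cos\delta$, i.e. $n\theta = \pm\delta + 2k\pi$ for some integer $k$. This yields the candidate roots $x = \cos\frac{\delta+2k\pi}{n}$ for $k=1,\dots,n$ (the choice of sign and the range of $k$ both just permute the same set of cosine values, since $\cos$ is even and $\frac{-\delta+2k\pi}{n}$ runs through the same residues).

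The key steps, in order: first, observe that $\deg\bigl(T_n(x)-\cos\delta\bigr)=n$ with leading coefficient $2^{n-1}$, matching the right-hand side. Second, check that the $n$ numbers $\beta_k := \cos\frac{\delta+2k\pi}{n}$, $k=1,\dots,n$, are each roots of $T_n(x)-\cos\delta$: indeed $T_n(\beta_k)=\cos\bigl(n\cdot\frac{\delta+2k\pi}{n}\bigr)=\cos(\delta+2k\pi)=\cos\delta$. Third — and this is the one genuinely delicate point — argue that these $n$ roots, counted with the multiplicity they appear with on the right, account for all $n$ roots of the degree-$n$ polynomial on the left; equivalently, that $\prod_{k=1}^n (x-\beta_k)$ divides $T_n(x)-\cos\delta$. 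If the $\beta_k$ happen to be distinct this is immediate; if some coincide one must check the multiplicity matches, which can be done by noting that $T_n(x)-\cos\delta$ and $\prod(x-\beta_k)$ have the same derivative behaviour at a repeated value, or more simply by a continuity/density argument: the identity holds for all $\delta$ with $\frac{\delta}{\pi}$ irrational (where the $\beta_k$ are automatically distinct), and both sides are continuous in $\delta$, so it holds for all real $\delta$.

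The main obstacle is thus the coincidence of roots for special values of $\delta$ (such as $\delta=0$ or $\delta=\pi$), where a naive ``distinct roots'' argument fails; the continuity-in-$\delta$ trick circumvents it cleanly, so I expect the whole proof to be short. An alternative that avoids multiplicity bookkeeping altogether is to prove the identity first over $\C$ for generic $\delta$ and then invoke that a polynomial identity valid on a set with a limit point is valid identically — but the real-analytic continuity argument in the parameter $\delta$ is the most elementary and is what I would write up.
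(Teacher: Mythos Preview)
Your proposal is correct and follows essentially the same route as the paper: verify that each $\cos\frac{\delta+2k\pi}{n}$ is a root via $T_n(\cos\theta)=\cos n\theta$, match the leading coefficient $2^{n-1}$, handle the generic case where the $n$ roots are distinct, and then extend to the remaining special values of $\delta$ by continuity. The only cosmetic difference is that the paper takes $\sin\delta\neq 0$ as the generic set (which is in fact exactly the condition for distinctness), whereas you use the smaller dense set where $\delta/\pi$ is irrational; either works.
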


\begin{proof}
Since $T_n(\cos\theta)=\cos n\theta,$ we have
$$T_n \Big(\cos\frac{\delta + 2k\pi}{n}\Big)=\cos(\delta + 2k\pi)=\cos\delta, \quad k=1,2,\cdots,n. $$
When $\sin\delta \neq 0$, i.e., $\delta$ is not an integral multiple of $\pi$,
$$\displaystyle \cos\frac{\delta+ 2k\pi}{n}, \quad k= 1,2,\cdots,n$$ are mutually different.
Thus $\displaystyle \cos\frac{\delta+ 2k\pi}{n} (k=1,2,\cdots,n)$ are different roots of $T_n(x)-\cos\delta$.
Since $T_n(x)-\cos\delta$ is a polynomial of degree $n$, with highest coefficient $2^{n-1}$, hence
$$T_n(x) - \cos\delta= 2^{n-1}\prod_{k=1}^{n}  \Big(x - \cos\frac{\delta + 2k\pi}{n}\Big).$$
When $\sin\delta =0$,  by a suitable limit process, it is easy to see that the above equality still holds for any real number $x$.
Hence the two sides are equal as polynomials of $x$.
\end{proof}


\begin{corollary}\label{cor2}
Suppose $n$ is a positve integer and $\delta$ is a real number,  then
$$\cos n\theta - \cos\delta = 2^{n-1}\prod_{k=1}^{n}   \Big(\cos\theta-\cos\frac{\delta + 2k\pi}{n} \Big).$$
\end{corollary}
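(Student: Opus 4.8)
The plan is to deduce this directly from Lemma \ref{lemma1} by the substitution $x=\cos\theta$. Concretely, I would start from the polynomial identity
$$T_n(x)-\cos\delta = 2^{n-1}\prod_{k=1}^{n}\Big(x-\cos\frac{\delta+2k\pi}{n}\Big)$$
established in Lemma \ref{lemma1}, which (as stressed at the end of that proof) holds as an identity of polynomials in $x$, hence for every real value of $x$. Plugging in $x=\cos\theta$ for an arbitrary real $\theta$ and invoking the defining relation $T_n(\cos\theta)=\cos n\theta$, the left-hand side becomes $\cos n\theta-\cos\delta$ while the right-hand side becomes $2^{n-1}\prod_{k=1}^{n}\big(\cos\theta-\cos\frac{\delta+2k\pi}{n}\big)$, which is exactly the claimed formula.

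Since $\cos$ maps $\mathbb{R}$ onto $[-1,1]$, this substitution recovers the formula for every argument that can occur, so nothing further needs to be verified; all the content is already contained in Lemma \ref{lemma1}. Accordingly, I do not expect any genuine obstacle here — the only point that requires a moment's care is the observation that Lemma \ref{lemma1} is an equality of \emph{polynomials} (valid for all $x$, not merely at the listed roots), which is precisely why the substitution $x=\cos\theta$ is legitimate even though $\cos\theta$ only ranges over $[-1,1]$.

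For completeness I might remark that one could instead reprove the statement in the same style as Lemma \ref{lemma1}: the choices $\theta=\frac{\delta+2k\pi}{n}$ ($k=1,\dots,n$) yield $\cos n\theta=\cos\delta$, so the values $\cos\frac{\delta+2k\pi}{n}$ are roots of the degree-$n$ polynomial $t\mapsto T_n(t)-\cos\delta$ in the variable $t$, and matching the leading coefficient $2^{n-1}$ gives the factorization. This, however, merely re-derives Lemma \ref{lemma1}, so the one-line substitution argument above is the cleaner route and the one I would present.
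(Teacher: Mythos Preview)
Your proposal is correct and matches the paper's intent: the paper states Corollary~\ref{cor2} immediately after Lemma~\ref{lemma1} without proof, so the intended argument is precisely the substitution $x=\cos\theta$ together with $T_n(\cos\theta)=\cos n\theta$, exactly as you describe.
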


\begin{lemma}\label{zerolemma}
Suppose $f(x, y)$ is a polynomial of two variables with real coefficients. If there exists  $r>0$, such that
$$f(x, y) =0, \quad \forall x,y \in (-r,r),$$
then $f = 0$.
\end{lemma}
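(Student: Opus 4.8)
The plan is to reduce this two-variable statement to the elementary one-variable fact that a nonzero polynomial over $\R$ has only finitely many roots, applying it twice. Write $d=\deg_x f$ and expand $f$ in powers of $x$:
$f(x,y)=\sum_{i=0}^{d} a_i(y)\,x^i$, where each coefficient $a_i(y)$ lies in $\R[y]$.

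First I would freeze $y$: fix an arbitrary $y_0\in(-r,r)$ and consider the one-variable polynomial $g_{y_0}(x):=f(x,y_0)=\sum_{i=0}^{d} a_i(y_0)\,x^i\in\R[x]$. By hypothesis $g_{y_0}(x)=0$ for every $x\in(-r,r)$, and the interval $(-r,r)$ is infinite; since a nonzero element of $\R[x]$ has at most $\deg$ many roots, $g_{y_0}$ must be the zero polynomial, so $a_i(y_0)=0$ for all $i=0,\dots,d$. Because $y_0\in(-r,r)$ was arbitrary, each $a_i\in\R[y]$ then vanishes on the whole interval $(-r,r)$, hence vanishes on an infinite set, hence $a_i=0$ in $\R[y]$ by the same one-variable argument. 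Therefore every coefficient of $f$ is zero, i.e. $f=0$ as a polynomial.

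There is no real obstacle here: the only input is that $\R$ is an infinite integral domain. (One could equivalently run an induction on the number of variables, or invoke the identity theorem for real-analytic functions, but the iterated one-variable argument is the most elementary and self-contained.) The single point worth stating carefully is the order of the two reductions — one must first fix $y=y_0$ and conclude coefficientwise vanishing in $x$, and only then let $y_0$ range over $(-r,r)$ — so I would phrase that step explicitly to keep the bookkeeping clean.
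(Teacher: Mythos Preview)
Your proof is correct. The paper takes a different route: it writes $f(x,y)=\sum_{i,j} a_{i,j}x^iy^j$, evaluates at $x=y=0$ to get $a_{0,0}=0$, and then repeatedly takes partial derivatives and evaluates at the origin to kill off the remaining $a_{i,j}$ one at a time --- essentially reading off the Taylor coefficients at $(0,0)$.

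Your iterated one-variable argument (freeze $y$, use that a nonzero polynomial in $\R[x]$ has finitely many roots, then vary $y$) is more elementary in that it avoids differentiation altogether, and it is more portable: as you note, it works verbatim over any infinite integral domain, whereas the derivative argument implicitly uses characteristic zero (so that $\partial^{\,i+j}/\partial x^i\partial y^j$ actually recovers $i!\,j!\,a_{i,j}$). The paper's approach, on the other hand, is slightly more direct in that it pins down each coefficient by a single evaluation rather than by two nested ``infinitely many roots'' arguments. Either way the lemma is routine; your version is perfectly acceptable and arguably cleaner.
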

\begin{proof}
Suppose $\displaystyle f(x,y) = \sum_{i,j} a_{i,j} x^i y^j$, which is a finite sum.
Let $x=y=0$,  we get $a_{0,0} =0$. Take partial derivative with respect to $x$ and let $x=y=0$, we get $a_{1,0}=0$.
By similar operations, we can see that $a_{i,j} = 0$ for all $i$ and $j$. Hence $f=0$.
\end{proof}

\begin{corollary}\label{identical_cor}
Suppose$f(x, y)$ and $g(x,y)$ are polynomials of two variables with real coefficients. If there exists $r>0$, such that
$$f(x, y) =g(x,y), \quad \forall x,y \in (-r,r),$$
then $f = g$.
\end{corollary}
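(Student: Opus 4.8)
The plan is to reduce the statement immediately to Lemma \ref{zerolemma} by passing to the difference polynomial. Set $h(x,y) = f(x,y) - g(x,y)$. Since $f$ and $g$ are polynomials in two variables with real coefficients, $h$ is again such a polynomial. By hypothesis $f(x,y) = g(x,y)$ for all $x,y \in (-r,r)$, so $h(x,y) = 0$ for all $x,y \in (-r,r)$ with the same $r > 0$.

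Now Lemma \ref{zerolemma} applies verbatim to $h$: a two-variable real polynomial that vanishes on the square $(-r,r)\times(-r,r)$ is the zero polynomial. Hence $h = 0$, which is exactly $f = g$ as polynomials. This finishes the proof, and in the write-up it should take only two or three lines.

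I do not expect any genuine obstacle here, since all the content has already been isolated in Lemma \ref{zerolemma}, whose proof recovers each coefficient $a_{i,j}$ of $h$ by repeatedly taking partial derivatives and evaluating at the origin. The one point worth keeping in mind — and the reason the hypothesis $r>0$ (rather than, say, vanishing on a curve or a single line) is essential — is that the origin must be an interior point of the region on which $h$ vanishes, so that those iterated partial derivatives at $0$ are determined by the values of $h$ on the square; otherwise the conclusion fails, as the example $xy(x-y)$ vanishing on the line $x=0$ shows. But this subtlety is entirely absorbed into the cited lemma, so the corollary is a one-step consequence.
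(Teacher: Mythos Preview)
Your argument is correct and is exactly the intended one: the paper states this as an immediate corollary of Lemma~\ref{zerolemma}, and applying that lemma to $h=f-g$ is the whole proof. Your additional remarks about why an open square (rather than a lower-dimensional set) is needed are accurate but go beyond what the paper records.
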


\section{Proof of Theorem \ref{theorem1}}
\begin{proof}
When $x,y\in [-1,1]$, let $x=\cos\alpha, y=\cos\beta$, then
\begin{align*}
&T^2_n(x) -2T_n(x) T_n(y) \cos\delta + T^2_n(y) -\sin^2\delta\\
=& \cos^2 n\alpha-2\cos n\alpha \cos n\beta \cos\delta +\cos^2 n\beta -\sin^2\delta \\
=&\frac{1+\cos 2n\alpha}{2}- \big[\cos n(\alpha+\beta) +\cos n(\alpha-\beta) \big]\cos\delta +\frac{1+\cos 2n\beta}{2}-\sin^2\delta\\
=&  \cos n(\alpha+\beta) \cos n(\alpha-\beta) -\big[\cos n(\alpha+\beta) +\cos n(\alpha-\beta) \big]\cos\delta + \cos^2\delta\\
=& \big[  \cos n(\alpha+\beta) -\cos\delta  \big]\cdot \big[  \cos n(\alpha-\beta) -\cos\delta  \big].
\end{align*}
By Corollary \ref{cor2}, the above is equal to
\begin{align*}2^{n-1}\prod_{k=1}^n \Big(\cos(\alpha+\beta)- \cos\frac{\delta + 2k\pi}{n}\Big) \cdot 2^{n-1}\prod_{k=1}^n \Big(\cos(\alpha-\beta)-  \cos\frac{\delta + 2k\pi}{n}\Big).
\end{align*}
Since $\cos(\alpha \pm \beta) = \cos\alpha \cos\beta \mp \sin\alpha\sin\beta$, the above is equal to
\begin{align*}
&4^{n-1}\prod_{k=1}^n \Big(\cos\alpha \cos\beta - \sin\alpha\sin\beta-  \cos\frac{\delta + 2k\pi}{n} \Big)
 \Big(\cos\alpha \cos\beta + \sin\alpha\sin\beta-  \cos\frac{\delta + 2k\pi}{n} \Big)\\
=& 4^{n-1}\prod_{k=1}^n \Big[ \Big(\cos\alpha \cos\beta -  \cos\frac{\delta + 2k\pi}{n} \Big)^2 -  \Big(\sin\alpha\sin\beta \Big)^2 \Big]\\
=& 4^{n-1}\prod_{k=1}^n \Big[ \Big(xy -  \cos\frac{\delta + 2k\pi}{n} \Big)^2 - (1-x^2)(1-y^2) \Big]\\
=&4^{n-1}\prod_{k=1}^n\Big(x^2-2 \cos\frac{\delta + 2k\pi}{n} xy+y^2-\sin^2\frac{\delta + 2k\pi}{n} \Big).
\end{align*}
Hence, when $x,y\in [-1,1]$, we have
\begin{align*}
&T^2_n(x) -2T_n(x) T_n(y) \cos\delta + T^2_n(y) -\sin^2\delta\\
=&4^{n-1}\prod_{k=1}^n\Big(x^2-2 \cos\frac{\delta + 2k\pi}{n} xy+y^2-\sin^2\frac{\delta + 2k\pi}{n} \Big).
\end{align*}
By Corollary \ref{identical_cor},  the two sides are equal as polynomials. This completes the proof of Theorem \ref{theorem1}.
\end{proof}

\section{Proof of Theorem \ref{theorem2}}
\begin{proof}
When $y\in [-1,1]$, let $y=\cos\beta$. By Lemma \ref{lemma1},
\begin{align*}
&  T_{2n+1}(x) - T_{2n+1}(y) =T_{2n+1}(x) - \cos(2n+1)\beta = 2^{2n}\prod_{k=1}^{2n+1}  \Big(x - \cos (\beta +\frac{2k\pi}{2n+1} )\Big)\\
=& 4^n (x-\cos\beta) \prod_{k=1}^{n} \Big(x - \cos (\beta +\frac{2k\pi}{2n+1} )\Big)\Big(x - \cos (\beta -\frac{2k\pi}{2n+1} )\Big)\\
=& 4^n (x-\cos\beta)\prod_{k=1}^n   \Big[ \Big(x - \cos\beta \cos\frac{2k\pi}{2n+1} \Big)^2 -  \Big(\sin\beta \sin\frac{2k\pi}{2n+1} \Big)^2 \Big]\\
=& 4^n (x-y)\prod_{k=1}^n   \Big[ \Big(x - y \cos\frac{2k\pi}{2n+1} \Big)^2 -  (1-y^2)  \sin^2 \frac{2k\pi}{2n+1} \Big]\\
=& 4^n (x-y) \prod_{k=1}^n\Big(x^2-2 \cos\frac{2k\pi}{2n+1} xy+y^2-\sin^2\frac{2k\pi}{2n+1} \Big).
\end{align*}
Hence, for any $x, y\in [-1,1]$, we have
\begin{align*} T_{2n+1}(x) - T_{2n+1}(y)
 =4^{n}(x-y)\prod_{k=1}^{n} \Big(x^2-2\cos\frac{2k\pi}{2n+1}xy+y^2-\sin^2\frac{2k\pi}{2n+1}\Big).\end{align*}
By Corollary \ref{identical_cor}, the two sides are equal as polynomials. This completes the proof of Theorem \ref{theorem2}.
\end{proof}

\section{Proof of the sufficient part of Conjecture \ref{conj2}}
We rewrite the the sufficient part of Conjecture \ref{conj2} as a theorem.
\begin{theorem}\label{sufficient_part}
Suppose $m$ and $n$ are two coprime positive integers,  and $\delta$ is a real number with $\sin\delta \neq 0$, then
$$T^2_n(x) -2T_n(x) T_m(y) \cos\delta + T^2_m(y) -\sin^2\delta$$
is irreducible over $\R$.
\end{theorem}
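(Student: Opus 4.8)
The plan is to argue by contradiction. Suppose $P(x,y):=T^2_n(x)-2T_n(x)T_m(y)\cos\delta+T^2_m(y)-\sin^2\delta=Q(x,y)R(x,y)$ with $Q,R\in\R[x,y]$ non-constant; I would show that then $Q$ or $R$ is forced to be a non-constant polynomial in $x$ alone. That is impossible: if $q(x)\mid P$ with $q$ non-constant, pick a root $x_0\in\C$ of $q$; then $P(x_0,y)$ would vanish identically in $y$, whereas $P(x_0,y)$ has degree $2m$ in $y$ with leading coefficient $4^{m-1}\neq 0$. So the entire content is to pin down the possible shapes of $Q$ and $R$.

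To do this I would pass to the function field of the circle. Let $\lambda$ be an indeterminate and set $C:=\tfrac12(\lambda+\lambda^{-1})$, so $\C(\lambda)$ is a rational function field, of degree $2$ over $\C(C)$, and the $\C$-linear automorphism $\sigma$ of $\C(\lambda)$ with $\sigma(\lambda)=\lambda^{-1}$ has fixed field exactly $\C(C)$. Using $T_k\!\big(\tfrac{\lambda+\lambda^{-1}}{2}\big)=\tfrac{\lambda^k+\lambda^{-k}}{2}$ and completing the square in $T_m(y)$ — the same computation as in the proof of Theorem \ref{theorem1} — one obtains, in $\C(\lambda)[y]$,
\[
P(C,y)=\big(T_m(y)-w_+\big)\big(T_m(y)-w_-\big),\qquad w_\pm:=\frac{\lambda^{n}e^{\pm i\delta}+\lambda^{-n}e^{\mp i\delta}}{2},
\]
where the identities $w_++w_-=2T_n(C)\cos\delta$ and $w_+w_-=T^2_n(C)-\sin^2\delta$ are checked by direct computation in $\C(\lambda)$, and $\sigma(w_\pm)=w_\mp$. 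Write $f_\pm:=T_m(y)-w_\pm$, so $P(C,y)=f_+f_-$ and $\sigma(f_\pm)=f_\mp$. Substituting $x=C$ in $P=QR$ gives $Q(C,y)R(C,y)=f_+f_-$; since $Q,R$ have coefficients in $\R[x]\subseteq\C(C)$, both $Q(C,y)$ and $R(C,y)$ are $\sigma$-invariant.

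The key point — and the only place where coprimality of $m$ and $n$ enters — is that $f_+$ is irreducible over $\C(\lambda)$. Substituting $y=\tfrac12(z+z^{-1})$, the equation $T_m(y)=w_+$ becomes $(z^m-\lambda^n e^{i\delta})(z^m-\lambda^{-n}e^{-i\delta})=0$, so the roots of $f_+$ are $\tfrac12(\zeta^{j}z_0+\zeta^{-j}z_0^{-1})$, $j=0,\dots,m-1$, where $\zeta=e^{2\pi i/m}$ and $z_0$ is a fixed $m$-th root of $\lambda^n e^{i\delta}$; hence the splitting field of $f_+$ over $\C(\lambda)$ is $\C(\lambda)(z_0)$. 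By Kummer theory $[\C(\lambda)(z_0):\C(\lambda)]=m/d$, where $d$ is the largest divisor of $m$ for which $\lambda^n e^{i\delta}$ is a $d$-th power in $\C(\lambda)^{\times}$; but $\lambda^n e^{i\delta}$ is a $d$-th power exactly when $d\mid n$, so $d=\gcd(m,n)=1$ and the splitting field has degree $m$. Since $z_0$ is transcendental over $\C$ (being an $m$-th root of the transcendental element $\lambda^n e^{i\delta}$) it is not a root of unity, so the $m$ roots of $f_+$ are distinct and are permuted cyclically by $\mathrm{Gal}(\C(\lambda)(z_0)/\C(\lambda))\cong\mathbb{Z}/m$; thus each root of $f_+$ already generates the full degree-$m$ splitting field, so $f_+$ is the minimal polynomial of any of its roots and is irreducible. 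The same holds for $f_-=\sigma(f_+)$, and $f_+$ is not an associate of $f_-$ (they have the same $y$-degree $m$ and leading coefficient, while $w_+\neq w_-$ because $\sin\delta\neq0$), so $f_+,f_-$ are non-associate irreducibles in the UFD $\C(\lambda)[y]$. Consequently $Q(C,y)$, a divisor of $f_+f_-$, equals up to a unit one of $1,\ f_+,\ f_-,\ f_+f_-$; applying $\sigma$ and using $\sigma(Q(C,y))=Q(C,y)$ and $\sigma(f_\pm)=f_\mp$ rules out $f_+$ and $f_-$. Hence $\deg_y Q(C,y)=0$ or $\deg_y R(C,y)=0$, and since $C$ is transcendental over $\R$ the corresponding factor is a non-constant polynomial in $x$ alone — the contradiction sought in the first paragraph.

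I expect the irreducibility of $f_+$ — concretely, the Kummer-theoretic fact that $\lambda^n e^{i\delta}$ fails to be a proper power in $\C(\lambda)^{\times}$ precisely because $\gcd(m,n)=1$ — to be the main obstacle; everything else is formal bookkeeping. Two further points need care: that $f_+$ genuinely has the displayed form (the completing-the-square step together with $w_+w_-=T^2_n(C)-\sin^2\delta$, which uses the identity $(1-C^2)$ times the square of the second-kind Chebyshev polynomial equals $1-T^2_n(C)$), and that the $m$ roots of $f_+$ are really distinct, so that the Galois group acts transitively and each single root generates the splitting field.
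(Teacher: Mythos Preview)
Your argument is correct and takes a genuinely different route from the paper's. The paper follows Tverberg's homogenization trick: it substitutes $(x,y)\mapsto(x^m,y^n)$ and examines the top-degree homogeneous parts $P_1,Q_1$ of the two putative factors; coprimality of $m,n$ forces each to be a binomial $p\,x^{mn}+p'\,y^{mn}$, and then an AM--GM estimate on the \emph{real} coefficients $p,p'$ yields $|\cos\delta|\ge 1$, contradicting $\sin\delta\neq0$. You instead specialize $x$ to $C=\tfrac12(\lambda+\lambda^{-1})$ and work in $\C(\lambda)[y]$: the factorization $P(C,y)=f_+f_-$ with $f_\pm$ irreducible (via Kummer theory, which is exactly where $\gcd(m,n)=1$ enters) and swapped by $\sigma:\lambda\mapsto\lambda^{-1}$ forces any $\sigma$-invariant divisor of $P(C,y)$ to be a unit or the whole product. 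The paper's approach is more elementary --- no field or Galois theory needed. Your approach is more structural and in fact buys more: the only feature of $Q,R$ you use is that $Q(C,y),R(C,y)$ lie in $\C(C)[y]$, the fixed field of $\sigma$; reality of the coefficients is never invoked. Hence the identical argument with $\R$ replaced by $\C$ throughout proves irreducibility over $\C$ directly, i.e.\ it yields the paper's Theorem~4 without the separate conjugation-and-sign step the paper uses to pass from $\R$ to $\C$.
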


Tverberg\cite{Tverberg1964} provided a simple proof of the following result:  Suppose$f(x)$ and $g(y)$ are two nonconstant polynomials with complex coefficients.
If the degrees of $f,g$ are coprime, then $f(x) + g(y)$ is irreducible over $\C$.

The first half of the following proof is essentially the same as the proof in \cite{Tverberg1964}, while the second half is new.

\begin{proof}  Suppose $T^2_n(x) -2T_n(x) T_m(y) \cos\delta + T^2_m(y) -\sin^2\delta= P(x,y)Q(x,y)$,
where $P, Q \in \R[x,y]$, and $P,Q$ are both nonconstant, then
$$T^2_n(x^m) -2T_n(x^m) T_m(y^n) \cos\delta + T^2_m(y^n) -\sin^2\delta = P(x^m,y^n)Q(x^m,y^n).$$
Denote $P_1, Q_1$  the homogeneous parts of $P(x^m,y^n), Q(x^m,y^n)$ with highest degrees, then
$$4^{n-1} x^{2mn}-2^{m+n-1}\cos\delta\cdot x^{mn}y^{mn} +4^{m-1} y^{2mn} = P_1 Q_1. $$
Let $w= \cos \delta + i \sin\delta$, then the left hand side equals
$$(2^{n-1} x^{mn} -w 2^{m-1}y^{mn}) (2^{n-1} x^{mn} -\bar{w} 2^{m-1}y^{mn}), $$
which has no monomial factors. Hence, $P_1$ has at least two terms, i.e.,
$$P_1 = p x^{km}y^{ln} +p' x^{k'm}y^{l'n}+\cdots.$$
As $P_1$ is homogeneous, $km+ln = k'm+l'n$, hence $(k-k') m = (l'-l)n$. Since $m,n$ are coprime, we have $n| k-k', m| l'-l$.
Suppose $k>k'$, then $k \geq n+k' \geq n$, $l'>l$, and $l'\geq m +l\geq m$. So $\deg P_1= km+ln\geq mn,$
with equality if and only if $k=n, l'=m, l=k'=0$.

Similarly, $Q_1$ has at least two terms, i.e.,
$$Q_1= q x^{sm}y^{tn} +q' x^{s'm}y^{t'n}+\cdots.$$
As $Q_1$ is homogeneous, $sm+tn = s'm+t'n$, $(s-s') m = (t'-t)n$.
Since $m,n$ are coprime, we have $n| s-s', m| t'-t$. Suppose $s>s'$, then $s \geq n+s' \geq n$, $t'>t$, and $t'\geq m +t\geq m$. So $\deg Q_1 = sm+tn\geq mn,$
with equality if and only if  $s=n, t'=m, t=s'=0$.

Since the degree of $4^{n-1} x^{2mn}-2^{m+n-1}\cos\delta\cdot x^{mn}y^{mn} +4^{m-1} y^{2mn} $ is $2mn$,
 the above inequalities are all equalities, and $P_1, Q_1$ both have two terms, i.e., $P_1 = p x^{mn} +p' y^{mn}, Q_1= q x^{mn} +q' y^{mn}$.
 Compare the coefficients of the two sides of
$$4^{n-1} x^{2mn}-2^{m+n-1}\cos\delta\cdot x^{mn}y^{mn} +4^{m-1} y^{2mn}  = (p x^{mn} +p' y^{mn}) (q x^{mn} +q' y^{mn}), $$
we have
$pq = 4^{n-1},p'q'=4^{m-1}, pq' + p'q =-2^{m+n-1}\cos\delta.$
Hence,
$$-2^{m+n-1}\cos\delta= p\cdot \frac{4^{m-1}}{p'} +p'\cdot \frac{4^{n-1}}{p}. $$
Since $\displaystyle \frac{p}{p'}$ and $\displaystyle \frac{p'}{p}$ are both positive or both negative, we can take absolute values and get
$$2^{m+n-1}|\cos\delta| = \Big|\frac{4^{m-1}p}{p'}\Big| + \Big|\frac{4^{n-1}p'}{p}\Big|
\geq 2\sqrt{\Big|\frac{4^{m-1}p}{p'}\Big| \cdot \Big|\frac{4^{n-1}p'}{p}\Big| } =2^{m+n-1}.$$
This implies $|\cos\delta| = 1$, which contradicts with $\sin\delta\neq 0$.\end{proof}

We can strengthen the conclusion a little further.
\begin{theorem}
Suppose $m$ and $n$ are two coprime positive integers,  and $\delta$ is a real number with $\sin\delta \neq 0$, then
$$T^2_n(x) -2T_n(x) T_m(y) \cos\delta + T^2_m(y) -\sin^2\delta$$
is irreducible over $\C$.
\end{theorem}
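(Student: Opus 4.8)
The plan is to reduce to Theorem~\ref{theorem1} via a Chebyshev substitution and then finish with a symmetry argument that exploits $\gcd(m,n)=1$ at precisely the place where the proof over $\R$ used positivity of reals. Write $F(x,y)=T^2_n(x)-2T_n(x)T_m(y)\cos\delta+T^2_m(y)-\sin^2\delta$ and suppose, for contradiction, that $F=PQ$ with $P,Q\in\C[x,y]$ both nonconstant. Substituting $x\mapsto T_m(x)$ and $y\mapsto T_n(y)$ and using $T_n\circ T_m=T_m\circ T_n=T_{mn}$ turns the left side into $T^2_{mn}(x)-2T_{mn}(x)T_{mn}(y)\cos\delta+T^2_{mn}(y)-\sin^2\delta$, which by Theorem~\ref{theorem1} equals $4^{mn-1}\prod_{k=1}^{mn}C_k(x,y)$ where $C_k(x,y)=x^2-2\cos\theta_k\,xy+y^2-\sin^2\theta_k$ and $\theta_k=\frac{\delta+2k\pi}{mn}$. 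Since $\sin\delta\neq0$, we have $\sin\theta_k\neq0$ for every $k$, the values $\cos\theta_1,\dots,\cos\theta_{mn}$ are pairwise distinct, and each $C_k$ is irreducible over $\C$ (viewed as a quadratic in $x$ its discriminant is $4\sin^2\theta_k(1-y^2)$, not a square in $\C[y]$). Hence $4^{mn-1}\prod_kC_k$ is a factorization into distinct irreducibles, and unique factorization in $\C[x,y]$ yields a subset $S\subseteq\{1,\dots,mn\}$ and constants $c,c'$ with
$$P(T_m(x),T_n(y))=c\prod_{k\in S}C_k(x,y),\qquad Q(T_m(x),T_n(y))=c'\prod_{k\notin S}C_k(x,y).$$
A degree count shows that $R\mapsto R(T_m(x),T_n(y))$ sends nonconstant polynomials to nonconstant ones, so $S$ is a nonempty proper subset of $\{1,\dots,mn\}$.

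Next I would extract the hidden symmetry of the left-hand factor. Putting $x=\cos\alpha$, $y=\cos\beta$ and using the identity $(uv-\cos\theta_k)^2-(1-u^2)(1-v^2)=C_k(u,v)$, one gets $C_k(\cos\alpha,\cos\beta)=(\cos(\alpha+\beta)-\cos\theta_k)(\cos(\alpha-\beta)-\cos\theta_k)$, hence $\prod_{k\in S}C_k(\cos\alpha,\cos\beta)=\Pi_S(\cos(\alpha+\beta))\,\Pi_S(\cos(\alpha-\beta))$, where $\Pi_S(w):=\prod_{k\in S}(w-\cos\theta_k)$. On the other hand $c\prod_{k\in S}C_k(\cos\alpha,\cos\beta)=P(\cos m\alpha,\cos n\beta)$, which is unchanged under $\alpha\mapsto\alpha+\frac{2\pi}{m}$ and under $\beta\mapsto\beta+\frac{2\pi}{n}$. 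Setting $s=\alpha+\beta$ and $d=\alpha-\beta$, which range independently over $\C$, and fixing a generic value of $d$, each of these invariances forces $\Pi_S(\cos s)$ to be a nonzero constant multiple of $\Pi_S\big(\cos(s+\frac{2\pi}{m})\big)$, respectively of $\Pi_S\big(\cos(s+\frac{2\pi}{n})\big)$. Comparing the zero sets of these entire functions of $s$, the finite set $Z:=\{\pm\theta_k\bmod 2\pi:k\in S\}\subseteq\R/2\pi\mathbb{Z}$ must satisfy $Z=Z+\frac{2\pi}{m}=Z+\frac{2\pi}{n}$.

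Coprimality then finishes the job. Because $\gcd(m,n)=1$, translations by $\frac{2\pi}{m}$ and $\frac{2\pi}{n}$ together generate the cyclic group $\Gamma=\langle\frac{2\pi}{mn}\rangle$ of order $mn$ in $\R/2\pi\mathbb{Z}$, so $Z$ is $\Gamma$-invariant. The sets $O_+=\{\theta_k\bmod 2\pi:k\in\mathbb{Z}\}$ and $O_-=-O_+$ are single $\Gamma$-orbits, and they are disjoint exactly because $\sin\delta\neq0$. As $\{\theta_k:k\in S\}\subseteq O_+$ while $\{-\theta_k:k\in S\}\subseteq O_-$, we have $Z\cap O_+=\{\theta_k:k\in S\}$; being an intersection of $\Gamma$-invariant sets it is $\Gamma$-invariant, and being nonempty it must be all of $O_+$ (since $\Gamma$ acts transitively on $O_+$). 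Hence $\{\theta_k:k\in S\}$ has $mn$ elements, i.e.\ $S=\{1,\dots,mn\}$, contradicting that $S$ is proper. Therefore $F$ is irreducible over $\C$.

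The step I expect to be the main obstacle is extracting the symmetry in the second paragraph: the ``deck transformations'' $\alpha\mapsto\alpha+\frac{2\pi}{m}$ and $\beta\mapsto\beta+\frac{2\pi}{n}$ are not polynomial — not even rational — maps in $x,y$, so one cannot simply declare $P(T_m(x),T_n(y))$ invariant under a substitution; one has to pass to the trigonometric parametrization, check that $s$ and $d$ are genuinely free variables, and only then read off a clean combinatorial condition on $S$. By contrast the remaining ingredients — the $T_{mn}$-substitution feeding into Theorem~\ref{theorem1}, the irreducibility and distinctness of the $C_k$, and the degree bookkeeping — are routine.
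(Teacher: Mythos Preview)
Your argument is correct and takes a genuinely different route from the paper. The paper first establishes irreducibility over $\R$ (via the Tverberg-style leading-part argument together with an AM--GM estimate) and then upgrades to $\C$ by a short conjugation trick: if $F$ factored nontrivially over $\C$, unique factorization and $\R$-irreducibility would force $F=f\bar f$, hence $F\ge 0$ on real inputs, contradicting $F\bigl(\cos\frac{\pi}{2n},\cos\frac{\pi}{2m}\bigr)=-\sin^2\delta<0$. Your approach instead bypasses the $\R$-result entirely: the substitution $(x,y)\mapsto(T_m(x),T_n(y))$ feeds directly into the explicit factorization of Theorem~\ref{theorem1}, and the coprimality hypothesis enters through the orbit structure of the translation group $\langle\tfrac{2\pi}{m},\tfrac{2\pi}{n}\rangle=\langle\tfrac{2\pi}{mn}\rangle$ acting on the zero set of $\Pi_S(\cos s)$. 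The trade-off is that your proof is noticeably longer and requires some care in the analytic-to-combinatorial passage (the step you flagged), whereas the paper's conjugation-plus-sign argument is essentially two lines once Theorem~\ref{sufficient_part} is in hand; on the other hand, your method is self-contained over $\C$, recovers the $\R$-statement as a by-product, and makes the roles of Theorem~\ref{theorem1} and of $\gcd(m,n)=1$ structurally transparent.
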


\begin{proof} Suppose $T^2_n(x) -2T_n(x) T_m(y) \cos\delta + T^2_m(y) -\sin^2\delta = f_1 f_2\cdots f_l$,
where $l\geq 2,  f_1, f_2,\cdots, f_l$ are all nonconstant irredubible polynomials in $\C[x,y]$.
Take conjugation, we get $T^2_n(x) -2T_n(x) T_m(y) \cos\delta + T^2_m(y) -\sin^2\delta = \bar{f}_1 \bar{f}_2\cdots \bar{f}_l$.
Since the highest term (in $x$) of the left hand side is $4^{n-1}x^{2n}$, we can assume that each of $f_1, f_2,\cdots, f_l$ has highest coefficient $\sqrt[l]{4^{n-1}}$.

As $\C[x,y]$ is a UFD,
$\bar{f}_1, \bar{f}_2, \cdots, \bar{f}_l$ must be a rearrangement of $f_1, f_2,\cdots, f_l$.
By Theorem \ref{sufficient_part},  $T^2_n(x) -2T_n(x) T_m(y) \cos\delta + T^2_m(y) -\sin^2\delta$ is irreducible over $\R$,
we have $l=2$ and $f_2 = \bar{f}_1 $. Hence,
$$T^2_n(x) -2T_n(x) T_m(y) \cos\delta + T^2_m(y) -\sin^2\delta = f_1(x,y) \bar{f}_1(x,y).$$
Take $x = \frac{\pi}{2n}$ and $y = \frac{\pi}{2m}$, the left hand side is equal to $-\sin^2\delta<0$, while the right hand side is nonnegative, contradiction!
\end{proof}

When $\delta= \frac{\pi}{2}$, we get an important special case.
\begin{corollary}
Suppose $m$ and $n$ are two coprime positive integers,  then $T^2_n(x) + T^2_m(y) -1$ is irreducible over $\C$.
\end{corollary}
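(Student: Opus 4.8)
The plan is to read this corollary off as the instance $\delta=\tfrac{\pi}{2}$ of the theorem proved immediately above. First I would note that $\cos\tfrac{\pi}{2}=0$ and $\sin\tfrac{\pi}{2}=1$, so $\sin\delta\neq 0$ and substituting $\delta=\tfrac{\pi}{2}$ turns
\[
T^2_n(x)-2T_n(x)T_m(y)\cos\delta+T^2_m(y)-\sin^2\delta
\]
into exactly $T^2_n(x)+T^2_m(y)-1$. Since $m$ and $n$ are assumed coprime, that theorem applies verbatim and yields irreducibility over $\C$; no further argument is required.

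Should one prefer a self-contained proof, I would rerun the two steps of the preceding proof with $\delta=\tfrac{\pi}{2}$ fixed throughout. In the first step one passes to $T^2_n(x^m)+T^2_m(y^n)-1$ and compares top-degree homogeneous parts; the leading form is now $4^{n-1}x^{2mn}+4^{m-1}y^{2mn}=(2^{n-1}x^{mn}-i\,2^{m-1}y^{mn})(2^{n-1}x^{mn}+i\,2^{m-1}y^{mn})$, which has no monomial factor. Coprimality of $m,n$ then forces each homogeneous leading part of a hypothetical factor to be a binomial $px^{mn}+p'y^{mn}$, and matching coefficients gives $pq=4^{n-1}$, $p'q'=4^{m-1}$, $pq'+p'q=0$; the AM--GM bound
\[
\Bigl|\tfrac{4^{m-1}p}{p'}\Bigr|+\Bigl|\tfrac{4^{n-1}p'}{p}\Bigr|\ \ge\ 2^{m+n-1}
\]
then contradicts $pq'+p'q=0$. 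This proves irreducibility over $\R$. For the passage to $\C$ I would argue exactly as in the last proof: in the UFD $\C[x,y]$ the conjugate factors must match, so a nontrivial factorization would read $T^2_n(x)+T^2_m(y)-1=f_1\bar f_1$, and evaluating at $x=\cos\tfrac{\pi}{2n}$, $y=\cos\tfrac{\pi}{2m}$ (where $T_n(x)=T_m(y)=0$) gives $-1$ on the left and a nonnegative number on the right, a contradiction.

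Since the statement is a direct specialization, I expect no real obstacle; the only decision is whether to invoke the general-$\delta$ theorem (which I would) or to reproduce its proof in this special case.
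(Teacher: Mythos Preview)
Your proposal is correct and matches the paper's approach: the corollary is obtained simply by specializing the preceding theorem to $\delta=\tfrac{\pi}{2}$, exactly as you do in your first paragraph. Your optional self-contained rerun is also fine (and in fact cleans up the paper's evaluation point to $x=\cos\tfrac{\pi}{2n}$, $y=\cos\tfrac{\pi}{2m}$), but it is not needed.
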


\section{Proof of the necessary part of Conjecture \ref{conj2}}
Denote $t_n(x,y) = T^2_n(x) -2T_n(x) T_n(y) \cos\delta + T^2_n(y) -\sin^2\delta$, then Theorem \ref{theorem1} tells us:  when $n>1$, $t_n(x,y)$ is reducible over $\mathbb{R}$.
Now we rewrite the necessary part of Conjecture \ref{conj2} as a theorem.
\begin{theorem}Suppose $m$ and $n$ are two positive integers, and $\delta$ is a real number with $\sin\delta \neq 0$.
If $T^2_n(x) -2T_n(x) T_m(y) \cos\delta + T^2_m(y) -\sin^2\delta$ is irreducible over $\mathbb{R}$, then $m,n$ are coprime. \end{theorem}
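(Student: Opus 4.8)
The plan is to prove the contrapositive: assuming $d:=\gcd(m,n)>1$, I will show that $T^2_n(x) -2T_n(x) T_m(y) \cos\delta + T^2_m(y) -\sin^2\delta$ splits over $\R$ into at least two nonconstant factors. Writing $m=dm'$ and $n=dn'$ with $m',n'\ge 1$, the target is the explicit identity
\begin{align*}
& T^2_n(x) -2T_n(x) T_m(y) \cos\delta + T^2_m(y) -\sin^2\delta\\
={}& 4^{d-1}\prod_{k=1}^{d}\left(T^2_{n'}(x)-2\cos\frac{\delta+2k\pi}{d}\,T_{n'}(x)T_{m'}(y)+T^2_{m'}(y)-\sin^2\frac{\delta+2k\pi}{d}\right),
\end{align*}
which generalizes Theorem \ref{theorem1} (recovered when $m=n$, so that $d=n$ and $m'=n'=1$) and settles the theorem immediately: when $d\ge 2$ it displays the left side as the product of the nonzero constant $4^{d-1}$ with $d\ge 2$ polynomials in $\R[x,y]$, each of degree $2n'\ge 2$ in $x$ and hence nonconstant.

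To establish the identity I would, exactly as in the proofs of Theorems \ref{theorem1} and \ref{theorem2}, first verify it on $[-1,1]^2$ and then invoke Corollary \ref{identical_cor}. Put $x=\cos\alpha$, $y=\cos\beta$. Repeating verbatim the computation in the proof of Theorem \ref{theorem1}, but with $n\beta$ replaced by $m\beta$, gives
$$T^2_n(x) -2T_n(x) T_m(y) \cos\delta + T^2_m(y) -\sin^2\delta=\big[\cos(n\alpha+m\beta)-\cos\delta\big]\big[\cos(n\alpha-m\beta)-\cos\delta\big].$$
Since $n\alpha\pm m\beta=d(n'\alpha\pm m'\beta)$, Corollary \ref{cor2}, applied with $d$ in place of $n$ and with angle $n'\alpha\pm m'\beta$, factors each bracket:
$$\cos(n\alpha\pm m\beta)-\cos\delta=2^{d-1}\prod_{k=1}^{d}\left(\cos(n'\alpha\pm m'\beta)-\cos\frac{\delta+2k\pi}{d}\right).$$

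Multiplying the two products and regrouping the resulting $2d$ linear factors into $d$ pairs sharing the same index $k$, I would simplify each pair using $\cos(A+B)\cos(A-B)=\cos^2A+\cos^2B-1$ and $\cos(A+B)+\cos(A-B)=2\cos A\cos B$ with $A=n'\alpha$, $B=m'\beta$, together with $\cos n'\alpha=T_{n'}(x)$ and $\cos m'\beta=T_{m'}(y)$, to obtain
$$\left(\cos(n'\alpha+m'\beta)-c\right)\left(\cos(n'\alpha-m'\beta)-c\right)=T^2_{n'}(x)-2c\,T_{n'}(x)T_{m'}(y)+T^2_{m'}(y)-(1-c^2),$$
where $c=\cos\frac{\delta+2k\pi}{d}$ and $1-c^2=\sin^2\frac{\delta+2k\pi}{d}$. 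Tracking the constant $2^{d-1}\cdot 2^{d-1}=4^{d-1}$ then yields the displayed identity for all $x,y\in[-1,1]$, and Corollary \ref{identical_cor} promotes it to an identity in $\R[x,y]$, which as noted above completes the proof.

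I do not expect a genuine obstacle; the argument is a slightly delicate but otherwise routine reorganization of the trigonometric computation already used for Theorem \ref{theorem1}. The crux, and the only place requiring care, is the pairing of the $2d$ linear factors: neither $\cos(n'\alpha+m'\beta)-c$ nor $\cos(n'\alpha-m'\beta)-c$ is individually a polynomial in $x,y$, yet each \emph{conjugate} pair is, and it is precisely this pairing that turns the factorization into one over $\R[x,y]$. A secondary point is the bookkeeping that matches the constant $4^{d-1}$ and the degrees on the two sides.
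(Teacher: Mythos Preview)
Your proof is correct and reaches exactly the factorization the paper uses (it appears explicitly in Section~7). The difference is one of economy: the paper observes, via the nesting identity $T_{dn'}=T_d\circ T_{n'}$, that the polynomial in question equals $t_d\bigl(T_{n'}(x),T_{m'}(y)\bigr)$ where $t_d(u,v)=T_d^2(u)-2\cos\delta\,T_d(u)T_d(v)+T_d^2(v)-\sin^2\delta$, and then simply cites Theorem~\ref{theorem1} to conclude that $t_d$ (hence its composition with $T_{n'},T_{m'}$) factors nontrivially when $d>1$. You instead rederive the identity from scratch by redoing the trigonometric computation of Theorem~\ref{theorem1} with $m\beta$ in place of $n\beta$; this works, but the nesting trick collapses your page of calculation to a single line.
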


\begin{proof}We use contradiction. Suppose $d= \gcd(m,n)>1$, let $m = m'd, n=n'd$, then
\begin{align*}
&T^2_n(x) -2T_n(x) T_m(y) \cos\delta + T^2_m(y) -\sin^2\delta\\
=& T_d(T_{n'}(x))^2 -2T_d(T_{n'}(x))T_d(T_{m'}(y)) \cos\delta +T_d(T_{m'}(y))^2 -\sin^2\delta \\
=& t_d(T_{n'}(x), T_{m'}(y))
\end{align*}
is reducible over $\mathbb{R}$.
\end{proof}

\section{Proof of Conjecture \ref{conj1}}
\begin{proof}
Let $d =\gcd(m,n)$, and $m = m'd, n=n'd$. Using the notations introduced in the last section and Theorem \ref{theorem1}, we have
\begin{align*}
&T^2_n(x) -2T_n(x) T_m(y) \cos\delta + T^2_m(y) -\sin^2\delta= t_d(T_{n'}(x), T_{m'}(y))\\
=&4^{d-1}\prod_{k=1}^d\Big(T^2_{n'}(x)-2 \cos\frac{\delta + 2k\pi}{d} T_{n'}(x)T_{m'}(y)+T_{m'}(y)^2-\sin^2\frac{\delta + 2k\pi}{d} \Big).\end{align*}
As $m'$ and $n'$ are coprime, each factor
$$T^2_{n'}(x)-2 \cos\frac{\delta + 2k\pi}{d} T_{n'}(x)T_{m'}(y)+T_{m'}(y)^2-\sin^2\frac{\delta + 2k\pi}{d} $$
is irreducible over $\C$, corresponding to a Lissajous curve. Hence, the curve defined by $T^2_n(x) -2T_n(x) T_m(y) \cos\delta + T^2_m(y) -\sin^2\delta =0$
is the union of a finite number of Lissajous curves.
\end{proof}

\section{The degenerate case of Conjecture \ref{conj2}}
The two conjectures proposed in \cite{Merino2003}, also include the degenerate case, i.e.,  when the Chebyshev-Lissajous polynomial is $T_n(x)\pm T_m(y)$.
In this section, we consider the degenerate case of Conjecture \ref{conj2}, i.e., when is the polynomial
$T_n(x) \pm T_m(y)$ irreducible?  The answer is a little bit subtle.

\begin{theorem} Suppose $m$ and $n$  are two positive integers, $d=\gcd(m,n)$,  then
$T_n(x)- T_m(y)$ is irreducible over $\mathbb{R}$ if and only if $d=1$, and $T_n(x) + T_m(y)$ is irreducible over $\mathbb{R}$ if and only if $d\leq 2$.
\end{theorem}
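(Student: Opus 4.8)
The statement concerns when $T_n(x) \pm T_m(y)$ is irreducible over $\mathbb{R}$, with $d = \gcd(m,n)$. The strategy is to separate the "generic" irreducibility from the "composition" obstruction, exactly mirroring the structure used for the non-degenerate case. First I would handle the coprime case $d=1$ using Tverberg's theorem (quoted in the excerpt): when $\gcd(m,n)=1$, the polynomial $T_n(x) \pm T_m(y)$ is a sum $f(x) + g(y)$ of two one-variable polynomials of coprime degrees $n$ and $m$, hence irreducible over $\mathbb{C}$, a fortiori over $\mathbb{R}$. This settles the "if" direction of the first claim and half of the "if" direction of the second when $\min(m,n) \le 1$ forces $d=1$; more importantly it is the base case for the reduction below.

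Next I would treat the reducibility (the "only if" directions) by the composition trick already used in the paper. Writing $m = m'd$, $n = n'd$ and using the nesting identity $T_k(T_j(x)) = T_{kj}(x)$, we get
\begin{align*}
T_n(x) - T_m(y) &= T_d(T_{n'}(x)) - T_d(T_{m'}(y)),\\
T_n(x) + T_m(y) &= T_d(T_{n'}(x)) + T_d(T_{m'}(y)).
\end{align*}
So it suffices to factor $T_d(u) \mp T_d(v)$ as a polynomial in $u,v$ and then substitute $u = T_{n'}(x)$, $v = T_{m'}(y)$. For $T_d(u) - T_d(v)$ with $d \ge 2$: $u - v$ is always a factor (both sides vanish when $u=v$), and since $T_d$ has degree $d \ge 2$ the cofactor is a nonconstant polynomial, so $T_n(x) - T_m(y)$ is reducible whenever $d \ge 2$. (When $d$ is odd, Theorem \ref{theorem2} gives the explicit factorization; for even $d$ one still has the factor $u-v$, which is all that is needed.) For $T_d(u) + T_d(v)$: the relevant observation is that $T_d$ is an even function when $d$ is even, so $T_d(v) = T_d(-v)$, but that alone does not produce a factor of $T_d(u)+T_d(v)$. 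Instead, when $d \ge 3$, I would use Lemma \ref{lemma1} to write $T_d(u) + T_d(v) = T_d(u) - \cos(d\beta - \pi) \cdot(\text{adjust})$; more cleanly: $T_d(u) + T_d(v) = 0$ on the curve $T_d(u) = -T_d(v)$, and setting $v = \cos\beta$ gives $T_d(u) = -\cos d\beta = \cos(d\beta + \pi)$, so by Lemma \ref{lemma1} the roots in $u$ are $\cos\frac{(2k+1)\pi + d\beta}{d}$ for $k=1,\dots,d$; pairing $k$ with $d-1-k$ (using $\cos$ parity) as in the proof of Theorem \ref{theorem2} yields quadratic-in-$(u,v)$ factors, hence a nontrivial factorization when $d \ge 3$. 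Thus $T_n(x) + T_m(y)$ is reducible over $\mathbb{R}$ when $d \ge 3$.

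Finally I must prove irreducibility of $T_n(x) + T_m(y)$ in the remaining case $d = 2$ — this is the delicate "subtle" case, and I expect it to be the main obstacle. Here $m = 2m'$, $n = 2n'$ with $\gcd(m',n') = 1$ and $m',n'$ not both even; since $d=2$, exactly one of $m',n'$ is even is impossible ($\gcd=1$ with both even is impossible, and if $d=\gcd(m,n)=2$ then $m',n'$ are coprime, so at most one is even, meaning $m'n'$ can be even or odd). Using $T_2(t) = 2t^2 - 1$, we have $T_n(x) + T_m(y) = 2T_{n'}(x)^2 - 1 + 2T_{m'}(y)^2 - 1 = 2\big(T_{n'}(x)^2 + T_{m'}(y)^2 - 1\big)$. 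Now apply the already-proved Corollary to the non-degenerate case: since $\gcd(m',n')=1$, the polynomial $T_{n'}(x)^2 + T_{m'}(y)^2 - 1$ is irreducible over $\mathbb{C}$, hence over $\mathbb{R}$. Therefore $T_n(x) + T_m(y)$ is (up to the constant $2$) irreducible over $\mathbb{R}$ when $d=2$. Combining: $T_n(x)+T_m(y)$ is irreducible over $\mathbb{R}$ iff $d \le 2$, and $T_n(x)-T_m(y)$ is irreducible over $\mathbb{R}$ iff $d = 1$. The one point requiring care is confirming that $\gcd(m,n)=2$ forces $\gcd(m',n')=1$ (immediate) and that Corollary \ref{cor}'s polynomial really is the $\delta=\pi/2$ specialization covered by the already-established complex irreducibility result — which it is, since $T^2_{n'}(x)+T^2_{m'}(y)-1$ is exactly that corollary's statement.
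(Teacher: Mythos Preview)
Your proposal is correct and structurally matches the paper: Tverberg for $d=1$, the factor $u-v$ of $T_d(u)-T_d(v)$ for the minus case with $d\ge 2$, and the identity $T_n(x)+T_m(y)=2\big(T_{n'}^2(x)+T_{m'}^2(y)-1\big)$ together with the earlier irreducibility corollary for the $d=2$ plus case.

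The one place you diverge from the paper is the reducibility of $T_d(u)+T_d(v)$ for $d\ge 3$. You propose a uniform argument: write $T_d(u)+\cos d\beta$ via Lemma~\ref{lemma1} and pair the roots $\cos\!\big(\beta+\tfrac{(2k+1)\pi}{d}\big)$ under $k\leftrightarrow d-1-k$ to obtain real quadratic factors in $(u,v)$ (with one linear factor $u+v$ surviving when $d$ is odd, since that pairing has a fixed point). The paper instead splits into two shorter subcases: for odd $d$ it simply notes that $T_d$ is an odd function, so $u+v\mid T_d(u)+T_d(v)$; for even $d=2e>2$ it uses $T_d(u)+T_d(v)=2\big(T_e^2(u)+T_e^2(v)-1\big)$ and then invokes Corollary~\ref{cor} (with $e>1$) to see that this already factors nontrivially. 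Your route is more uniform and yields the full factorization at once; the paper's route avoids redoing the pairing computation by recycling results already proved. Either way the conclusion is the same.
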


If we replace $\R$ by $\C$, the conclusion is also correct.

\begin{proof}
When $d=1$, by the conclusion of \cite{Tverberg1964}, $T_n(x) \pm T_m(y)$ is irreducible over $\C$, hence  irreducible over $\R$.

When $d>1$, let $m = m'd, n=n'd$, then $m',n'$ are coprime.

1. The case of $T_n(x) - T_m(y)$ is simple.  Since $x-y | T_d(x) - T_d(y)$, $T_d(x) - T_d(y)$ is reducible over $\R$, so
$T_n(x) - T_m(y) =T_d(T_{n'}(x))-T_d(T_{m'}(y))$ is reducible over $\R$.

2. The case of $T_n(x) + T_m(y)$ is a little complicated.

(1)When $d$ is odd, since $x+y | T_d(x) + T_d(y)$, $T_d(x) + T_d(y)$ is reducible over $\R$, hence
$T_n(x) + T_m(y) =T_d(T_{n'}(x))+T_d(T_{m'}(y))$ is reducible over $\R$.

(2)When $d=2$, $T_n(x) + T_m(y) = T_{2n'}(x) + T_{2m'}(y) = 2\big(T_{n'}^2(x)+ T_{m'}^2(y)-1\big)$  is irreducible over $\R$.
The simplest example is $T_2(x) +T_2(y) = 2(x^2 + y^2-1)$.

(3)When $d$ is an even integer bigger than $2$,  let $d=2e$, then $e>1$. By Corollary \ref{cor}, $T_d(x) + T_d(y)= 2\big( T_e^2(x) +T_e^2(y) -1 \big)$  is reducible over  $\R$.
Hence, $T_n(x) + T_m(y) =T_d(T_{n'}(x))+T_d(T_{m'}(y))$  is reducible over  $\R$.
\end{proof}

\section{The degenerate case of Conjecture \ref{conj1}}
In this section, we consider the degenerate case of Conjecture \ref{conj1}, i.e., whether the curve defined by $T_n(x) \pm T_m(y) =0$
is the union of a finite number of  Lissajous curves?  The answer is yes.
\begin{theorem} Suppose $m$ and $n$ are two positive integers, then the curve defined by
$T_n(x)\pm T_m(y) =0$ is the union of a finite number of Lissajous curves.
\end{theorem}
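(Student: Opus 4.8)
The plan is to reduce to the coprime case via the nesting identity $T_{ab}=T_a\circ T_b$ and then to factor as in the proofs of Theorems~\ref{theorem1} and~\ref{theorem2}. Write $d=\gcd(m,n)$, $m=m'd$, $n=n'd$, so $\gcd(m',n')=1$; then $T_n(x)\pm T_m(y)=T_d(T_{n'}(x))\pm T_d(T_{m'}(y))$. Hence it suffices to factor the two-variable polynomials $T_d(u)-T_d(v)$ and $T_d(u)+T_d(v)$ so that, after substituting $u=T_{n'}(x)$ and $v=T_{m'}(y)$, every factor is a Lissajous polynomial: the zero set of $T_n(x)\pm T_m(y)$ is then the finite union of the zero sets of those factors, i.e.\ a finite union of Lissajous curves.

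For the factorizations, put $v=\cos\beta$ and apply Lemma~\ref{lemma1} with $\cos\gamma=T_d(v)=\cos d\beta$ (minus case) and with $\cos\gamma=-T_d(v)=\cos(d\beta+\pi)$ (plus case):
\begin{align*}
T_d(u)-T_d(v) &= 2^{d-1}\prod_{k=1}^{d}\Bigl(u-\cos\bigl(\beta+\tfrac{2k\pi}{d}\bigr)\Bigr),\\
T_d(u)+T_d(v) &= 2^{d-1}\prod_{k=1}^{d}\Bigl(u-\cos\bigl(\beta+\tfrac{(2k+1)\pi}{d}\bigr)\Bigr).
\end{align*}
Pairing each index $k$ with the one giving the reflected angle $\beta-\tfrac{2k\pi}{d}$ (resp.\ $\beta-\tfrac{(2k+1)\pi}{d}$) and using $\cos\beta=v$, $\sin^{2}\beta=1-v^{2}$, each pair of linear factors collapses to a quadratic factor $u^{2}-2\cos\theta\,uv+v^{2}-\sin^{2}\theta$ with $\theta$ not an integer multiple of $\pi$; the self-paired indices give the linear factors $u-v$ (always present in $T_d(u)-T_d(v)$), $u+v$ (present in $T_d(u)-T_d(v)$ iff $d$ is even), and $u+v$ (present in $T_d(u)+T_d(v)$ iff $d$ is odd). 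By Corollary~\ref{identical_cor} these are polynomial identities; substituting $u=T_{n'}(x)$, $v=T_{m'}(y)$ expresses $T_n(x)\pm T_m(y)$ as a nonzero constant times one or two of the linear factors $T_{n'}(x)\pm T_{m'}(y)$ and a product of factors of the shape
\[
T_{n'}(x)^{2}-2\cos\theta\,T_{n'}(x)T_{m'}(y)+T_{m'}(y)^{2}-\sin^{2}\theta,\qquad \sin\theta\neq 0.
\]

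It remains to see that every factor is a Lissajous polynomial, and this is where $\gcd(m',n')=1$ enters. For a quadratic factor, choose $a=0$, $b=\theta/m'$, so $\delta:=m'b-n'a=\theta$ with $\sin\delta\neq0$; Merino's elimination for $x=\cos(m't+a)$, $y=\cos(n't+b)$ yields exactly this quadratic, which is thus a (non-degenerate) Lissajous polynomial, and it is irreducible over $\R$ by Theorem~\ref{sufficient_part}, so its zero set is one Lissajous curve. For $T_{n'}(x)-T_{m'}(y)$ take $a=b=0$ (degenerate case $\delta=0$) and for $T_{n'}(x)+T_{m'}(y)$ take $a=0$, $b=\pi/m'$ (degenerate case $\delta=\pi$); the corresponding eliminants are precisely these linear polynomials, so they are (degenerate) Lissajous polynomials, and they are irreducible over $\R$ because $\deg T_{n'}=n'$ and $\deg T_{m'}=m'$ are coprime (\cite{Tverberg1964}). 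Therefore $\{T_n(x)\pm T_m(y)=0\}$ is the union of the finitely many Lissajous curves cut out by these factors, which proves the theorem.

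The only genuine labour is the index-pairing bookkeeping in the factorization step, but this merely repeats the computations in the proofs of Theorems~\ref{theorem1} and~\ref{theorem2}, the one extra chore being to keep track of which of $u\pm v$ occurs according to the sign and the parity of $d$. Everything afterwards is a direct appeal to Lemma~\ref{lemma1}, Corollary~\ref{identical_cor}, Theorem~\ref{sufficient_part}, Tverberg's theorem, and Merino's construction from the Introduction. The sole conceptual remark worth making explicit is that $T_{n'}(x)\pm T_{m'}(y)$ genuinely count as Lissajous polynomials — this is exactly the degenerate ($\sin\delta=0$) branch of Merino's procedure, which applies here because $m'$ and $n'$ are coprime.
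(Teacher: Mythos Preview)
Your argument is correct and in spirit the same as the paper's: reduce via $T_n(x)\pm T_m(y)=T_d(T_{n'}(x))\pm T_d(T_{m'}(y))$ with $d=\gcd(m,n)$, factor $T_d(u)\pm T_d(v)$ into linear and quadratic pieces of Lissajous type, and then invoke coprimality of $m',n'$. The execution differs, though. The paper treats the four cases (sign)$\times$(parity of $d$) separately with different tools: Theorem~\ref{theorem2} for $T_d(u)-T_d(v)$ with $d$ odd, the oddness of $T_d$ to flip to the plus case, Corollary~\ref{cor} for $T_d(u)+T_d(v)$ with $d$ even, and an induction on $d$ via $T_d(u)-T_d(v)=2\bigl(T_{d/2}(u)-T_{d/2}(v)\bigr)\bigl(T_{d/2}(u)+T_{d/2}(v)\bigr)$ for the remaining subcase. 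Your route is more uniform: a single application of Lemma~\ref{lemma1} to $T_d(u)\mp\cos\gamma$ followed by the same pairing trick used in the proof of Theorem~\ref{theorem2}, which simultaneously handles all signs and parities. This avoids the induction and the appeal to Corollary~\ref{cor}, at the cost of a slightly more careful bookkeeping of which shifts $\theta$ are self-paired. One small slip: you write ``one or two of the linear factors'', but for $T_d(u)+T_d(v)$ with $d$ even there are none (as you yourself noted a few lines above); this is purely cosmetic and does not affect the proof.
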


\begin{proof}
If $m$ and $n$ are coprime, then $T_n(x) \pm T_m(y)$ is irreducible over $\C$, so the curve defined by $T_n(x) \pm T_m(y)=0$ is a Lissajous curve.

If $m$ and $n$ are not coprime, then $d =\gcd(m,n)>1$. Let $m = m'd, n=n'd$.

(1)If $d$ is odd, suppose $d =2e+1$. By Theorem \ref{theorem2},
\begin{align*}& T_n(x) - T_m(y) = T_d( T_{n'}(x)) - T_d(T_{m'}(y))\\
=&4^{e}\big(T_{n'}(x) -T_{m'}(y)\big)\prod_{k=1}^{e} \Big(T_{n'}(x)^2-2\cos\frac{2k\pi}{2e+1}T_{n'}(x)T_{m'}(y)+T_{m'}(y)^2-\sin^2\frac{2k\pi}{2e+1}\Big).\end{align*}
As $m'$ and $n'$ are coprime, each factor on the right hand side is irreducible, corresponding to a Lissajous curve.
Hence, the curve defined by $T_n(x) - T_m(y) =0$ is the union of a finite number of Lissajous curves.

When $d$ is odd,  $T_d(x)$ is an odd function, so
\begin{align*}& T_n(x) + T_m(y) = T_d( T_{n'}(x)) + T_d(T_{m'}(y))= T_d( T_{n'}(x)) - T_d(-T_{m'}(y))\\
=&4^{e}\big(T_{n'}(x) + T_{m'}(y)\big)\prod_{k=1}^{e} \Big(T_{n'}(x)^2+2\cos\frac{2k\pi}{2e+1}T_{n'}(x)T_{m'}(y)+T_{m'}(y)^2-\sin^2\frac{2k\pi}{2e+1}\Big).\end{align*}
Hence, the curve defined by $T_n(x) + T_m(y) =0$ is the union of a finite number of Lissajous curves.

(2)When $d$ is even, suppose $d =2e$. By $T_d(x) + T_d(y) = 2\big(T_e^2(x) + T_e^2(y)-1 \big)$, we have
\begin{align*}
&T_n(x)  + T_m(y) = T_d( T_{n'}(x)) + T_d(T_{m'}(y)) = 2\big(T_e^2(T_{n'}(x)) + T_e^2(T_{m'}(y))-1 \big)\\
=&2 \cdot 4^{e-1}\prod_{k=1}^e  \Big(T^2_{n'}(x)-2 \cos\frac{(4k+1)\pi}{2e} T_{n'}(x)T_{m'}(y)+T_{m'}(y)^2-\sin^2\frac{(4k+1)\pi}{2e} \Big).\end{align*}
Hence, the curve defined by $T_n(x) + T_m(y) =0$ is the union of a finite number of Lissajous curves.

When $d$ is even, both $m$ and $n$ are even. Using the identity
$T_n(x) - T_m(y) = 2\Big(  T_{\frac{n}{2}}(x) - T_{\frac{m}{2}}(y) \Big) \Big(  T_{\frac{n}{2}}(x) + T_{\frac{m}{2}}(y) \Big)$
and induction, we can see that the curve defined by $T_n(x) - T_m(y) =0$ is the union of a finite number of Lissajous curves.
\end{proof}

\section{Acknowledgement}
The author thanks Dr. Kai-Liang Lin for his encouragement and many helpful discussions.

\end{document}